\numberwithin{equation}{section}
\newtheorem{theorem}{Theorem}[section]
\newtheorem{lemma}[theorem]{Lemma}
\newtheorem{corollary}[theorem]{Corollary}
\theoremstyle{definition}
\newtheorem{definition}[theorem]{Definition}
\newtheorem{remark}[theorem]{Remark}
\newtheorem{example}[theorem]{Example}
\newtheorem{convention}[theorem]{Convention}
\numberwithin{equation}{section}
\newcommand{\beq}{\begin{equation}}
\newcommand{\eeq}{\end{equation}}
\newcommand{\pr}{\mathbb{P}}
\newcommand{\supp}{\operatorname{Supp}}
\newcommand{\rk}{\operatorname{rk}}
\newcommand{\N}{\mathbb{N}}
\newcommand{\M}{\mathcal{M}}
\newcommand{\f}{\footnotesize}
\newcommand{\dd}{\delta}
\title{Separators of fat points in $\pr^n$}
\thanks{Updated: Revised Version}
\begin{document}
\author{Elena Guardo}
\address{Dipartimento di Matematica e Informatica\\
Viale A. Doria, 6 - 95100 - Catania, Italy}
\email{guardo@dmi.unict.it}

\author{Lucia Marino}
\address{Dipartimento di Matematica e Informatica\\
Viale A. Doria, 6 - 95100 - Catania, Italy}
\email{lmarino@dmi.unict.it}

\author{Adam Van Tuyl}
\address{Department of Mathematical Sciences\\
Lakehead University, Thunder Bay, ON, P7B 5E1, Canada}
\email{avantuyl@lakeheadu.ca}

\keywords{fat points, Hilbert function, resolutions, separators}
\subjclass{13D40, 13D02, 14M05}

\begin{abstract}
In this paper we extend the definition of a separator
of a point $P$ in $\pr^n$ to a fat point $P$ of multiplicity $m$.
The key idea in our definition is to compare the
fat point schemes
 $Z = m_1P_1 + \cdots + m_iP_i + \cdots +  m_sP_s \subseteq \pr^n$
and $Z' = m_1P_1 + \cdots + (m_i-1)P_i + \cdots + m_sP_s$. We
associate to $P_i$ a tuple of positive integers of length $\nu =
\deg Z - \deg Z'$. We call this tuple the degree of the minimal separators
of $P_i$ of multiplicity $m_i$, and we denote it by
$\deg_Z(P_i) = (d_1,\ldots,d_{\nu})$.  We show that if one knows
$\deg_Z(P_i)$ and the Hilbert function of $Z$, one will also know
the Hilbert function of $Z'$. We also show that the entries of
$\deg_Z(P_i)$ are related to the shifts in the last syzygy module
of $I_Z$. Both results generalize well known results about reduced
sets of points and their separators.
\end{abstract}
\maketitle


\section{Introduction}
Given a finite set of reduced points $X$ in $\pr^n$,
it is a classical idea to derive either
algebraic or geometric information about $X$ by using
the notion of a separator.  Our goal in this paper
is to extend the definition of a separator
so that it also includes the class of non-reduced sets points
which are usually called fat points.

Let $k$ be an algebraically closed field of characteristic zero.
A hypersurface defined by the homogeneous form $F \in R = k[x_0,\ldots,x_n]
= k[\pr^n_k]$ is said to
be a {\it separator} of $P \in X$ if $F(P) \neq 0$, but $F(Q) = 0$
for all $Q \in X\setminus\{P\}$, i.e., the hypersurface defined by $F$
passes through all the points of $X$ but $P$.  The {\it degree
of a point $P$ in $X$} is then defined to be
\[\deg_X(P) := \min\{\deg F ~|~ \mbox{$F$ is a separator of $P$}\}.\]

Separators first appeared in Orecchia's work \cite{Or1} on
the conductor of a set of points, although
the term separator does not appear until the paper
of Geramita, Kreuzer, and Robbiano \cite{GKR}.  Orecchia showed
that the conductor of the coordinate ring $A$ of a finite set
of reduced points $X = \{P_1,\ldots,P_s\}$, that is, the largest ideal
of $A$ that coincides with its extension in the integral
closure of $A$,  is generated by forms whose degrees are in the set
$\{\deg_X(P_1),\ldots,\deg_X(P_s)\}$.  For this reason, the
degree of a point $P$ in $X$ is sometimes called the {\it conductor
degree.}
 Geramita, Kreuzer, and Robbiano \cite{GKR} introduced
separators to study sets of points with the Cayley-Bacharach property.
Later investigations of the properties of separators
have included the work of Bazzotti \cite{Ba1}, Beccari and Massaza
\cite{BM2}, Sabourin
\cite{Sa}, and Sodhi \cite{S}. The definition of separators has
also been generalized to different contexts.  For example,
Bazzotti and Casanellas defined a separator for
reduced points on a surface \cite{BC}, while the authors have
studied separators of reduced sets of points in a multiprojective
space (see \cite{GV3,GV4,Ma3}).  The paper of Abbott, Bigatti,
Kreuzer, and Robbiano \cite{ABKR} contains a discussion
on how to compute the separators of a set of points.

Of particular importance to this paper are the results
of Geramita, Maroscia, and Roberts \cite{GMR} and
Abrescia, Bazzotti, and the second author \cite{ABM}.
If $X$ is a reduced set of points in $\pr^n$, and $d = \deg_X(P)$,
then Geramita, et al. showed that the Hilbert function
of $X \setminus \{P\}$ can be determined by knowing
the Hilbert function of $X$ and the value of $d$.  This
result nicely illustrates the idea that a separator gives
information about passing from $X$ to a subset of the
type $X \setminus \{P\}$.  Abrescia, et al. then
found a relationship between the
shifts in the last syzygy module of $I_X$ and the degree of
a point.  This result, originally only proved for points in $\pr^2$,
was independently extended to $\pr^n$ by the
second author \cite{Ma3} and Bazzotti \cite{Ba1}.

In the above cited work, the sets of points
being considered
are almost always a reduced set of points.  The work of Geramita,
Kreuzer, and Robbiano \cite{GKR}, Kreuzer \cite{K}, and Kreuzer
and Kreuzer \cite{KK} relaxed this condition and studied
zero dimensional subschemes $Z$, and considered subschemes
of colength 1, i.e., zero dimensional subschemes $Y \subseteq Z$
such that $\deg Y = \deg Z - 1$.  In this paper, however,
we are interested in the case that both $Y$ and $Z$ are sets
of fat points ($\deg Y = \deg Z -1$ is rarely true in this case), and
to define separators of fat points in this context.
If $X = \{P_1,\ldots,P_s\}$ is a set of reduced points in $\pr^n$,
and $m_1,\ldots,m_s$ are positive integers, then let $Z$ be the
scheme defined by $I_Z=I_{P_1}^{m_1}
\cap \cdots \cap I_{P_s}^{m_s}$ where each $I_{P_i}$ is the defining
ideal of the point $P_i$.  The scheme
$Z$, which we shall denote by
$Z=m_1P_1+\cdots+m_sP_s$, is usually called a {\it set of fat points} of $\pr^n$.

We want to define a separator of a fat point so that we recover
fat point analogs of the results of Geramita, et al. and
Abrescia, et al., that were mentioned above.  The key insight
that we need to carry out this program is to view
passing from $X$ to $X \setminus \{P\}$ as ``reducing'' the
multiplicity of $P$ by one, as opposed to ``removing'' the point $P$
from $X$.  This point-of-view appears to be the correct perspective
in order to get the desired generalizations.

Once we dispense with the preliminaries in Section 2, in Section 3
we introduce our definition of a separator for fat points.
In keeping with our idea of dropping the multiplicity of a point
by one, let $Z = m_1P_1 + \cdots + m_iP_i + \cdots m_sP_s$ and $Z'
= m_1P_1 + \cdots + (m_i-1)P_i + \cdots + m_sP_s$.  A {\it
separator} of the point $P_i$ of $Z$ of multiplicity $m_i$ is
any form $F$ such that $F \in I_{Z'} \setminus I_Z$. In Theorem
\ref{theorem1} we show that there exists a set of $\nu = \deg Z -
\deg Z'$ separators of the point $P_i$ of multiplicity $m_i$, say
$\{F_1,\ldots,F_\nu\}$, such that the ideal $I_{Z'}/I_Z$ is
minimally generated by
$(\overline{F}_1,\ldots,\overline{F}_{\nu})$ in the ring $R/I_Z$.
The {\it degree of the minimal separators of the fat point $P_i$ of
multiplicity $m_i$}, which is denoted $\deg_Z(P_i)$, is
the $\nu$-tuple $(\deg F_1,\ldots,\deg F_\nu)$.

In Section 4 and Section 5 we use our new definition
to prove fat point analogs of the results mentioned above.
In particular, we prove the following results:

\begin{theorem}
Let $Z$ and $Z'$ be the fat point schemes in $\pr^n$ defined
as above, and suppose $\deg_Z(P_i) = (d_1,\ldots,d_{\nu})$
where $\nu = \deg Z - \deg Z'$.
\begin{enumerate}
\item[(a)] \emph{[{Theorem \ref{hilbertfunction}}]}
 For all $t \in \N$,
\[\Delta H_{Z'}(t)  = \Delta H_Z(t) - |\{d_j \in (d_1,\ldots,d_{\nu}) ~|~ d_j =t \}|\]
where $\Delta H_{Y}$ denotes the first difference Hilbert function
of $Y$.
\item[(b)] \emph{[{Theorem \ref{teofat}}]}
If
\[0 \rightarrow \mathbb{F}_{n-1} \rightarrow
\cdots \rightarrow \mathbb{F}_0 \rightarrow I_Z \rightarrow 0 \]
is a minimal graded free resolution of $I_Z$, then the last syzygy
module has the form
\[\mathbb{F}_{n-1} = \mathbb{F}'_{n-1}\oplus R(-d_1-n) \oplus R(-d_2-n)
\oplus \cdots \oplus R(-d_{\nu}-n).\]
\end{enumerate}
\end{theorem}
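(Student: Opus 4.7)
The plan is to pin down the structure of the quotient module $I_{Z'}/I_Z$ first, then read off both parts from it. The structural claim I aim to prove is the graded $R$-module isomorphism
\[
I_{Z'}/I_Z \cong \bigoplus_{j=1}^{\nu} (R/I_{P_i})(-d_j).
\]

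The first step is to observe that $I_{P_i}$ annihilates $I_{Z'}/I_Z$: for any $g \in I_{P_i}$ and $F \in I_{Z'} \seq I_{P_i}^{m_i-1}$, the product $gF$ lies in $I_{P_i}^{m_i}$ and in each $I_{P_j}^{m_j}$ for $j \neq i$, so $gF \in I_Z$. Thus $I_{Z'}/I_Z$ is a finitely generated graded module over $R/I_{P_i}$, which as a graded ring is isomorphic to the polynomial ring $k[\ell]$ for any linear form $\ell$ with $\ell(P_i) \neq 0$. Next I would verify torsion-freeness over $R/I_{P_i}$: any representative $F \in I_{Z'} \setminus I_Z$ has vanishing order exactly $m_i - 1$ at $P_i$, while any $g \notin I_{P_i}$ has vanishing order $0$ there, so by additivity of vanishing orders the product $gF$ still has vanishing order $m_i - 1$, giving $gF \notin I_{P_i}^{m_i}$ and hence $gF \notin I_Z$. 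A finitely generated torsion-free graded module over the principal ideal domain $k[\ell]$ is free, and Theorem \ref{theorem1} provides exactly $\nu$ minimal generators in degrees $d_1, \ldots, d_\nu$, which settles the isomorphism.

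Part (a) now follows immediately: the short exact sequence $0 \to I_Z \to I_{Z'} \to I_{Z'}/I_Z \to 0$ gives $H_Z(t) - H_{Z'}(t) = H_{I_{Z'}/I_Z}(t)$, and since $H_{R/I_{P_i}}(t) = 1$ for all $t \geq 0$, the structural isomorphism yields $H_{I_{Z'}/I_Z}(t) = |\{j : d_j \leq t\}|$, whereupon taking first differences produces the stated formula. For part (b), the resolution of $R/I_{P_i}$ is the Koszul complex on $n$ linear forms and so has top free module $R(-n)$; hence the minimal free resolution of $I_{Z'}/I_Z$ has $\bigoplus_{j=1}^{\nu} R(-d_j - n)$ in homological degree $n$. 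Applying $\operatorname{Tor}(-,k)$ to the short exact sequence yields
\[
0 \to \operatorname{Tor}_n(I_{Z'}/I_Z, k) \to \operatorname{Tor}_{n-1}(I_Z, k) \to \operatorname{Tor}_{n-1}(I_{Z'}, k),
\]
where the initial $0$ is $\operatorname{Tor}_n(I_{Z'}, k)$, which vanishes because $R/I_{Z'}$ is Cohen-Macaulay of dimension $1$ and therefore $\operatorname{pd}(I_{Z'}) = n - 1$. The resulting graded injection $\bigoplus_j k(-d_j - n) \hookrightarrow \operatorname{Tor}_{n-1}(I_Z, k)$ forces each $R(-d_j - n)$ to appear as a summand of $\mathbb{F}_{n-1}$, with the remaining summands gathered into $\mathbb{F}'_{n-1}$.

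The principal obstacle is the torsion-freeness step in the structural claim, which relies on carefully exploiting the additivity of vanishing orders at $P_i$ together with the minimality assertion provided by Theorem \ref{theorem1}; once the structural isomorphism is in hand, both conclusions follow by standard homological bookkeeping with the Koszul complex of the point.
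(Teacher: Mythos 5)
Your argument is correct, and it takes a genuinely different route from the paper. The paper proves part (a) by directly counting $\dim_k(I_{Z'}/I_Z)_t$ via a linear-algebra argument with powers of a nonzero divisor $x_0$ (Lemma \ref{lemma1}), and it proves part (b) by a quite different inductive mapping-cone argument, first establishing in Lemma \ref{idealepunto} that each colon ideal $(I_Z,F_1,\dots,F_{j-1}):(F_j)$ equals $I_{P_i}$ and that each intermediate ideal $(I_Z,F_1,\dots,F_j)$ is saturated, then peeling off one separator at a time and tracking which shifts cancel in the resulting non-minimal resolution. You instead isolate a single structural statement, the graded isomorphism $I_{Z'}/I_Z \cong \bigoplus_{j}(R/I_{P_i})(-d_j)$, proved via annihilation by $I_{P_i}$ plus torsion-freeness over $R/I_{P_i}\cong k[\ell]$ (where torsion-freeness rests on the $I_{P_i}$-adic order being a valuation, i.e.\ on $\operatorname{gr}_{I_{P_i}}(R)$ being a domain, a fact the paper also exploits implicitly through $I_{P_i}^{(m)}=I_{P_i}^m$); both conclusions then fall out, (a) from the Hilbert series of the summands and (b) from the long exact Tor sequence together with $\operatorname{Tor}_n(I_{Z'},k)=0$. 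This is cleaner and unifies the two proofs under one roof, and it replaces the delicate cancellation bookkeeping of the mapping cone with a single homological vanishing. What it gives up relative to the paper's route is the explicit description of the chain of intermediate saturated ideals and their resolutions (Remark \ref{HFquasi}), which the authors use elsewhere; your Tor argument shows the shifts $-d_j-n$ must appear in $\mathbb{F}_{n-1}$ without exhibiting the comparison maps degree by degree. One small point worth saying explicitly: your isomorphism uses that the $\nu$ generators from Theorem \ref{theorem1} are minimal as generators over $R$, which coincides with minimality over $R/I_{P_i}$ precisely because $I_{P_i}$ kills the module, so the graded Nakayama counts agree; and the degenerate case $Z'=\emptyset$ (where $\operatorname{pd} I_{Z'}\ne n-1$) should be set aside, just as the paper tacitly does.
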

\noindent
As an interesting corollary of Theorem 1.1 (b), we note
that if $m = \max\{m_1,\ldots,m_s\}$ is the maximum of the multiplicities
of a set of fat points in $\pr^n$, then $\rk {\mathbb{F}_{n-1}} \geq \binom{m+n-2}{n-1}$.
See Corollary \ref{rank} for more details.

We end our paper in Section 6 by calculating $\deg_Z(P)$
when $Z$ is a special class of fat points.  We show that
if $Z$ is a {\it homogeneous} set of fat points, i.e., $m_1 =
\cdots = m_s$, whose support is a complete intersection, then for
every point $P$ in the support of $Z$, the degree of the minimal
separators of the fat point $P$ of multiplicity $m$ is the same
(see Theorem \ref{degCI}).  This result can be viewed as a
Cayley-Bacharach type of result since a set of reduced points has
the {\it Cayley-Bacharach property } if and only if the degree of every point in $X$ is the
same.  The results of this section extend our understanding of fat
points in special position (see, for example,
\cite{GV1,GV2} and references there within).

\noindent
{\bf Acknowledgments.} The authors thank A.V. Geramita,
B. Harbourne, and A. Ragusa
for their comments on earlier versions of this paper.
The third author acknowledges the support of NSERC.


\section{Preliminaries and notation}

In this section we collect together some well known results which
we shall need;
we continue to use the notation and definitions from the introduction.

Let $Z = m_1P_1 + \cdots + m_sP_s$ be a set of fat points
in $\pr^n$.  The positive integers $m_1,\ldots,m_s$ are called
the {\it multiplicities}.    If $m_1 = \cdots = m_s = m$, then we
refer to $Z$ as a {\it homogeneous scheme of fat points},
otherwise $Z$ is {\it non-homogeneous}.
The set of reduced points $X=\{P_1,\dots,P_s\}$ is called the {\it
support} of $Z$, and is denoted by $\supp(Z)$.
The {\it degree} of the fat point
scheme  $Z = m_1P_1 + \cdots + m_sP_s \subseteq \pr^n$
is  given by the formula $\deg Z := \sum_{i=1}^s \binom{m_i+ n - 1}{n}.$

The defining ideal of $Z$, denoted $I_Z$, is a homogeneous
ideal in the ring $R = k[x_0,\ldots,x_n]$.
The {\it Hilbert function of $Z$}, denoted $H_Z$, is the
numerical function $H_Z:\N \rightarrow \N$ defined by
\[H_Z(t) := \dim_k (R/I_Z)_t = \dim_k R_t - \dim_k (I_Z)_t ~~\mbox{for
$t \in \N$}.\]
The first difference function of $Z$, denoted $\Delta H_Z$,
is defined by
\[\Delta H_Z(t) := H_Z(t) - H_Z(t-1) ~~\mbox{where $H_Z(t) = 0$
if $t < 0$}.\]
The eventual value
of $H_Z$ is given by the degree of $Z$:

\begin{lemma} \label{eventualhilbertfunction}
Let $Z \subseteq \pr^n$ be a fat point scheme.
Then $H_Z(t) = \deg Z ~~\mbox{for  all $t \gg 0$.}$
\end{lemma}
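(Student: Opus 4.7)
The plan is to choose a general linear form $L$ that is a non-zero-divisor on $R/I_Z$, use it to reduce to an affine chart where the fat point scheme splits via the Chinese Remainder Theorem, and then verify that the resulting length is exactly $\deg Z$.

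First, since $k$ is algebraically closed (hence infinite) and there are only finitely many support points, I pick $L \in R_1$ with $L(P_i) \neq 0$ for every $i = 1,\ldots,s$. Because $I_Z = I_{P_1}^{m_1} \cap \cdots \cap I_{P_s}^{m_s}$ has associated primes exactly $I_{P_1}, \ldots, I_{P_s}$, and $L$ lies in none of them, $L$ is a non-zero-divisor on $R/I_Z$. The short exact sequence
$$0 \longrightarrow (R/I_Z)(-1) \xrightarrow{\,\cdot L\,} R/I_Z \longrightarrow R/(I_Z+(L)) \longrightarrow 0$$
then gives $\Delta H_Z(t) = \dim_k (R/(I_Z+(L)))_t$. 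Since $V(I_Z+(L)) = \emptyset$ in $\pr^n$, the quotient $R/(I_Z+(L))$ is Artinian, so $\Delta H_Z(t) = 0$ for $t \gg 0$, and $H_Z(t)$ stabilizes at some constant $c$.

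To identify $c$, I observe that for $t \gg 0$ multiplication by $L$ is a $k$-linear isomorphism $(R/I_Z)_t \to (R/I_Z)_{t+1}$ (injective by Step 1, and the two spaces share the same dimension once $\Delta H_Z$ vanishes). Hence, for $t \gg 0$,
$$H_Z(t) \;=\; \dim_k \bigl[(R/I_Z)_L\bigr]_0,$$
and the right-hand side is the $k$-dimension of the affine coordinate ring of $Z$ in the chart $\{L \neq 0\}$. Each $P_i$ sits in this chart, so the dehomogenization of $I_Z$ is the intersection of powers of distinct maximal ideals, which are pairwise comaximal; the Chinese Remainder Theorem then yields
$$\bigl[(R/I_Z)_L\bigr]_0 \;\cong\; \prod_{i=1}^s \mathcal{O}_{Z,P_i}.$$
Each factor has $k$-dimension $\binom{m_i+n-1}{n}$, so summing gives $c = \sum_{i=1}^s \binom{m_i+n-1}{n} = \deg Z$.

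I do not foresee a serious obstacle. The most technical point is the local computation $\dim_k \mathcal{O}_{Z,P_i} = \binom{m_i+n-1}{n}$, but after translating $P_i$ to the origin this reduces to counting monomials in $n$ variables of degree less than $m_i$, which is routine. Alternatively, one could simply quote that $R/I_Z$ is a one-dimensional Cohen--Macaulay graded ring, forcing its Hilbert polynomial to be constant; the advantage of the argument above is that it also pins down the explicit limit.
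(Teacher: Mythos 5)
The paper states Lemma \ref{eventualhilbertfunction} without proof, treating it as a standard fact about zero-dimensional schemes, so there is no proof in the paper to compare against. Your argument is a correct and self-contained proof of that fact. The structure is sound: you first use Lemma \ref{primarylemma}(a) implicitly when asserting that the associated primes of $I_Z$ are exactly the $I_{P_i}$ (each $I_{P_i}^{m_i}$ is $I_{P_i}$-primary and the intersection is irredundant since the $P_i$ are distinct), which lets you pick a linear non-zerodivisor $L$; the short exact sequence then gives $\Delta H_Z(t) = \dim_k (R/(I_Z+(L)))_t$, and since $R/(I_Z+(L))$ has finite length, $H_Z$ stabilizes. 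Your identification of the stable value via $[(R/I_Z)_L]_0$ is correct: for $t\gg 0$ the injective maps $(R/I_Z)_t \xrightarrow{\cdot L} (R/I_Z)_{t+1}$ are isomorphisms, the images form an increasing chain exhausting $[(R/I_Z)_L]_0$, and the chain stabilizes, so the common dimension equals $\dim_k[(R/I_Z)_L]_0$. The Chinese Remainder decomposition and the local count $\dim_k k[x_1,\ldots,x_n]/(x_1,\ldots,x_n)^{m_i} = \binom{m_i+n-1}{n}$ then give $\deg Z$. One cosmetic remark: you refer to ``Step 1'' though you never numbered your steps; you mean the non-zerodivisor property of $L$. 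The alternative you mention at the end --- quoting that $R/I_Z$ is one-dimensional Cohen--Macaulay so its Hilbert polynomial is a constant, and then computing that constant as the multiplicity --- would also work and is closer in spirit to how such lemmas are often cited without proof, but your explicit route has the merit of directly producing the numerical value $\sum_i \binom{m_i+n-1}{n}$.
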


We also require information about the ideal of a single (fat)
point in $\pr^n$.

\begin{lemma}\label{primarylemma}
Let $I_P$ be the prime ideal associated to a point
$P \in \pr^n$.
\begin{enumerate}
\item[(a)] The ideal $I_P^m$ is $I_P$-primary.
\item[(b)] The minimal free graded resolution of $I_P$ has the form
\[0 \rightarrow R(-n) \rightarrow R^{\binom{n}{n-1}}(-n+1) \rightarrow
\cdots \rightarrow R^{\binom{n}{1}}(-1) \rightarrow I_P \rightarrow 0.\]
\end{enumerate}
\end{lemma}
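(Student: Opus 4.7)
The plan is to make a projective change of coordinates on $\pr^n$ placing $P$ at $[1:0:\cdots:0]$ and then invoke the Koszul complex. Because $k$ is algebraically closed, $\mathrm{PGL}_{n+1}(k)$ acts transitively on $\pr^n$, and any such change of coordinates is induced by a graded $k$-algebra automorphism of $R = \kxo$; such an automorphism preserves primariness, associated primes of quotients, and the shape of a minimal graded free resolution. After this reduction $I_P = (x_1,\ldots,x_n)$, and the defining generators form a regular sequence in $R$.

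For part (a), since $I_P$ is generated by a regular sequence in the Cohen--Macaulay ring $R$, ordinary and symbolic powers coincide: $I_P^m = I_P^{(m)}$. The $m$-th symbolic power $I_P^{(m)} = I_P^m R_{I_P} \cap R$ is by definition an $I_P$-primary ideal, whence $I_P^m$ is $I_P$-primary. A self-contained alternative is to write each element of $R$ uniquely as $\sum_{\alpha \in \N^n} c_\alpha(x_0)\, x^\alpha$, where $x^\alpha$ is a monomial in $x_1,\ldots,x_n$ and $c_\alpha \in k[x_0]$; a short induction on $|\alpha|$, combined with the fact that $k[x_0]$ is a domain, verifies from the definition that the only associated prime of $(x_1,\ldots,x_n)^m$ is $(x_1,\ldots,x_n)$.

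For part (b), the Koszul complex on the regular sequence $x_1,\ldots,x_n$ yields an acyclic graded free resolution of $R/I_P$,
\[ 0 \to R(-n) \to R^{\binom{n}{n-1}}(-n+1) \to \cdots \to R^{\binom{n}{1}}(-1) \to R \to R/I_P \to 0, \]
in which every differential has matrix entries in the irrelevant ideal, so the resolution is minimal. Truncating off the final surjection $R \to R/I_P$ produces a minimal graded free resolution of $I_P$ of exactly the shape displayed in the statement.

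This lemma is preliminary material and presents no substantive obstacle. The only point requiring care is checking that the change-of-coordinates reduction is legitimate, which is immediate: a linear change of coordinates is a degree-preserving $k$-algebra automorphism of $R$ and therefore commutes with all the homological and ideal-theoretic invariants that appear in the statement.
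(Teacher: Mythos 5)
Your proof is correct and follows essentially the same route as the paper: part (a) via the identity $I_P^m = I_P^{(m)}$ for a complete intersection (the paper cites Zariski--Samuel for this, which is what underlies the regular-sequence fact you invoke), and part (b) via the Koszul resolution. The explicit change-of-coordinates reduction and the self-contained alternative for (a) are minor additions that the paper leaves implicit.
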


\begin{proof}
(a) Since $I_P$ is a complete intersection, $I_P^m = I_P^{(m)}$,
the $m$-th symbolic power of $I_P$.  This fact follows from
a classical result of Zariski and Samuel \cite[Lemma 5, Appendix 6]{ZS}.
But $I_P^{(m)}$ is the $I_P$-primary part of $I_P^m$, so the conclusion
follows.

For (b), one appeals to the Koszul resolution.
\end{proof}


\section{Defining separators of fat points }

In this section we extend the definitions of a separator of a reduced point $P$ in $\pr^n$
and the degree of $P$ in a set of points to the case of fat points.
At the heart of our definition is the point-of-view that
the comparison of the reduced sets of points  $X$ and  $X \setminus \{P\}$
used to define separators should be
seen as ``reducing'' the multiplicity of the point $P$
by one, as opposed to ``removing'' the point $P$ from $X$.
It is this feature, i.e., the idea of reducing the multiplicity
of the fat point by one, that we will generalize when defining
a separator for a fat point.

The following convention shall be useful throughout this paper.

\begin{convention}\label{convention}
Consider the fat point scheme
\[Z := m_1P_1 + \cdots + m_iP_i + \cdots + m_sP_s \subseteq \pr^n,\]
and fix a point $P_i \in \supp(Z)$.  We then let
\[Z':= m_1P_1 + \cdots
+ (m_i-1)P_i + \cdots + m_sP_s,\]
denote the fat point scheme obtained by reducing the multiplicity of
$P_i$ by one.  If $m_i-1 = 0$, then the point $P_i$ does
not appear in the support of $Z'$.
\end{convention}

Note that when $m_j = 1$ for $j = 1,\ldots,s$, then $Z$ is
simply the reduced set of points $X = \supp(Z)$, and $Z'$
is $X \setminus \{P_i\}$, i.e., we revert to
the original context in which separators were defined.
A separator will now be defined in terms
of forms that pass through $Z'$ but not $Z$.

\begin{definition}  Let $Z = m_1P_1 + \cdots + m_iP_i+ \cdots + m_sP_s$ be a set of fat points in
$\pr^n$.  We say that $F$ is a {\it separator of the point $P_i$
of multiplicity $m_i$} if $F \in I_{P_i}^{m_i-1} \setminus
I_{P_i}^{m_i}$ and $F \in I_{P_j}^{m_j}$ for all $j \neq i$.
\end{definition}

If $F$ is a separator of the point $P_i$
of multiplicity $m_i$, then $F \in I_{Z'} \setminus I_Z$.
Thus, to compare $Z$ and $Z'$, we need to compare the ideals
$I_Z$ and $I_{Z'}$.  We can do this algebraically by investigating
the ideal $I_{Z'}/I_Z$ in the ring $R/I_Z$.

\begin{theorem}  \label{theorem1}
Let $Z$ and $Z'$ be the fat point schemes of Convention \ref{convention}.
Then there exists $\nu = \deg Z - \deg Z'$ homogeneous polynomials $\{F_1,\ldots,F_{\nu}\}$
such that
\begin{enumerate}
\item[(a)] each $F_i$ is a separator of $P_i$ of multiplicity $m_i$, and
\item[(b)] in the ring $R/I_Z$, the ideal
\[I_{Z'}/I_Z = (\overline{F}_1,\ldots,\overline{F}_{\nu})
~~\mbox{where $\overline{F}_i$ denotes the class of $F_i$.}\]  Furthermore,
these polynomials form a minimal set of generators, where by minimal
we mean that no set of cardinality less than $\nu$ generates
$I_{Z'}/I_Z$.
\end{enumerate}
\end{theorem}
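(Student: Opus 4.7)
The plan is to realize $M := I_{Z'}/I_Z$ as a finitely generated graded module over a one-variable polynomial ring, so that its rank (and hence its number of minimal generators) is forced by the Hilbert function. My first step is to check that $I_{P_i}$ annihilates $M$: given $F \in I_{Z'}$ and $L \in I_{P_i}$, one has $LF \in I_{P_i}^{m_i-1}\cdot I_{P_i} = I_{P_i}^{m_i}$ and $LF \in I_{P_j}^{m_j}$ for each $j \neq i$, so $LF \in I_Z$. After a linear change of coordinates I may assume $P_i = [1:0:\cdots:0]$, and then $R/I_{P_i} \cong k[x_0]$ is a graded principal ideal domain on which $M$ is a finitely generated graded module.

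Second, I would embed $M$ inside a well-understood free $k[x_0]$-module. The composite $I_{Z'} \hookrightarrow I_{P_i}^{m_i-1} \twoheadrightarrow I_{P_i}^{m_i-1}/I_{P_i}^{m_i}$ has kernel $I_{Z'} \cap I_{P_i}^{m_i}$, which by the primary decompositions of $I_Z$ and $I_{Z'}$ equals $I_Z$; this induces a graded $R$-linear injection
\[\varphi : M \hookrightarrow I_{P_i}^{m_i - 1}/I_{P_i}^{m_i}.\]
In the chosen coordinates, $I_{P_i}^{m_i-1}/I_{P_i}^{m_i}$ is graded free over $k[x_0]$ with basis the monomials of degree $m_i - 1$ in $x_1,\ldots,x_n$, so its rank is $\binom{m_i+n-2}{n-1} = \deg Z - \deg Z' = \nu$.

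Third, since $M$ is a graded submodule of a graded free module over the graded PID $k[x_0]$, the graded structure theorem forces $M$ to be graded free of some rank $\rho \leq \nu$. The Hilbert function of such a module eventually stabilizes at $\rho$, while $\dim_k M_t = H_Z(t) - H_{Z'}(t)$ stabilizes at $\deg Z - \deg Z' = \nu$ by Lemma \ref{eventualhilbertfunction}; hence $\rho = \nu$. Any homogeneous $k[x_0]$-basis $\overline{F}_1,\ldots,\overline{F}_\nu$ of $M$ lifts to forms $F_1,\ldots,F_\nu \in I_{Z'}$. Each $F_j \in I_{Z'} \setminus I_Z$ automatically lies in $I_{P_k}^{m_k}$ for $k \neq i$ and in $I_{P_i}^{m_i-1} \setminus I_{P_i}^{m_i}$, so each is a separator of $P_i$ of multiplicity $m_i$ (proving (a)) and the $\overline{F}_j$ generate $I_{Z'}/I_Z$. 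The minimality in (b) follows from graded Nakayama: because $I_{P_i}$ annihilates $M$, the minimal number of $R$-generators of $M$ equals $\dim_k M/\mathfrak{m}M = \dim_k M/(x_0)M$, which is the rank of $M$ as a graded free $k[x_0]$-module, namely $\nu$.

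The main obstacle I anticipate is the careful verification that $I_{P_i}^{m_i-1}/I_{P_i}^{m_i}$ is indeed a graded free $k[x_0]$-module of rank $\binom{m_i+n-2}{n-1}$; this comes down to showing that no nontrivial $k[x_0]$-combination of the degree-$(m_i-1)$ monomials in $x_1,\ldots,x_n$ lies in $I_{P_i}^{m_i}$, which in turn rests on the description of $I_{P_i}^{m_i}$ (via Lemma \ref{primarylemma}(a) and a monomial argument) as the ideal of polynomials each of whose monomials has combined degree at least $m_i$ in $x_1,\ldots,x_n$. Once this and the compatibility of graded Nakayama across the quotient $R \twoheadrightarrow R/I_{P_i}$ are in place, the Hilbert-function matching produces the rank and closes the minimality assertion.
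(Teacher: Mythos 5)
Your proof is correct, but it takes a genuinely different, more structural route than the paper's.  The paper works directly with a minimal generating set $\{\overline{F}_1,\ldots,\overline{F}_s\}$ and shows $s = \nu$ by two hands-on linear-algebra arguments: choosing $L=x_0$ a nonzerodivisor on $R/I_Z$, it proves the classes $\overline{x_0^{t_i}F}_i$ are linearly independent in $(I_{Z'}/I_Z)_t$ for $t\gg0$ (giving $s\le\nu$), and then, after writing an arbitrary $\overline H\in(I_{Z'}/I_Z)_t$ as $\overline{\sum G_iF_i}$ and decomposing $G_i = c_ix_0^{t-\deg F_i}+G_i'$ with $G_i'\in I_{P_i}$, it observes $G_i'F_i\in I_Z$ so the classes also span, forcing $s\ge\nu$.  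You instead observe $I_{P_i}$ annihilates $M = I_{Z'}/I_Z$, so $M$ is a graded $k[x_0]$-module; you then embed $M$ into the free $k[x_0]$-module $I_{P_i}^{m_i-1}/I_{P_i}^{m_i}$ (using $I_{Z'}\cap I_{P_i}^{m_i}=I_Z$) to conclude $M$ is graded torsion-free, invoke the graded structure theorem over a PID to get $M$ graded free of some rank $\rho\le\nu$, pin $\rho=\nu$ by the Hilbert function, and read off minimality from graded Nakayama because $\mathfrak m M = x_0 M$.  The underlying mechanism is the same — the annihilation of $M$ by $I_{P_i}$ and the eventual value of $\dim_k M_t$ — but you package it as a structure theorem for $k[x_0]$-modules rather than as explicit linear independence/spanning of the shifted separators.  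What your route buys is conceptual clarity: $M$ is free of rank $\nu$ over $k[x_0]$, which also makes Lemma~\ref{lemma1} and Theorem~\ref{theorem2} immediate (a homogeneous $k[x_0]$-basis is the minimal separator set, and the Hilbert function of a graded free $k[x_0]$-module determines the multiset of shifts).  What the paper's more elementary argument buys is avoiding the structure theorem and avoiding the need to verify that $I_{P_i}^{m_i-1}/I_{P_i}^{m_i}$ is free; it also directly exhibits a $k$-basis of $(I_{Z'}/I_Z)_t$ for each $t$, which is reused verbatim in the proof of Lemma~\ref{idealepunto}.  One small remark: the paper additionally arranges $x_0$ to be a nonzerodivisor on $R/I_Z$; your embedding makes this redundant, since torsion-freeness of $M$ over $k[x_0]$ is inherited from the ambient free module rather than from $R/I_Z$.
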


\begin{proof} Because $I_{Z'}/I_Z$ is an ideal in the ring $R/I_Z$,
there exists $F_1,\ldots,F_s \in R$ such that $I_{Z'}/I_Z =
(\overline{F}_1,\ldots,\overline{F}_s)$. Moreover, because $R/I_Z$
is a Noetherian ring, we can assume that this $s$ is minimal, that
is, for any set $\{G_1,\ldots,G_t\}$ with $t < s$, then
$I_{Z'}/I_Z \neq (\overline{G}_1,\ldots,\overline{G}_t)$. Because
each $\overline{F}_j \neq 0$, this means that $F_j \not\in I_Z$.
However, $F_j \in I_{Z'}$. So, this implies that each $F_j$ is
a separator of $P_i$ of multiplicity $m_i$.  To complete the
proof, it suffices to show that $s = \deg Z - \deg Z'$.

Let $P = P_i$ and $m =m_i$.  After a linear change of variables,
we can assume that $P = [1:0:\cdots:0]$, and hence $I_P =
(x_1,\ldots,x_n)$. We can also assume that the hyperplane defined
by $L = x_0$ does not pass through any of the points of $\supp(Z)$.

We first show that $s \leq \deg Z - \deg Z'$. For all non-negative
integers $t$ we have the following short exact sequence of vector
spaces
\[0 \rightarrow (I_{Z'}/I_Z)_t \rightarrow (R/I_Z)_t \rightarrow (R/I_{Z'})_t \rightarrow 0\]
where $(M)_t$ denotes the vector space of
degree $t$ elements in $M$. Hence,
\[\dim_k (I_{Z'}/I_Z)_t = \dim_k (R/I_Z)_t - \dim_k (R/I_{Z'})_t ~~\mbox{for all $t \geq 0$.}\]
By Lemma \ref{eventualhilbertfunction},
$\dim_k (R/I_Z)_ t = \deg Z$, and $\dim_k
(R/I_{Z'})_t = \deg Z'$ for all $t \gg 0$. Hence $\dim_k (I_{Z'}/I_Z)_t = \deg Z -
\deg Z'$ for all $t \gg 0$.   Fix a $t$ such that  $\dim_k
(I_{Z'}/I_Z)_t = \deg Z - \deg Z'$ and set $t_i = t- \deg F_i$ for
each $i=1,\ldots,s$.  If necessary, we can also
take $t$ large enough so that $t_i > 0$ for all $i$.
Since $L = x_0$ is a nonzero divisor on
$R/I_Z$, each $\overline{x_0^{t_i}F}_i \neq \overline{0}$ in
$R/I_Z$.  Also note that
for each $i=1,\ldots,s$,  we have $\overline{x_0^{t_i}F}_i \in
(I_{Z'}/I_Z)_t$.

We claim that
$\left\{\overline{x_0^{t_1}F}_1,\ldots,\overline{x_0^{t_s}F}_s\right\}$ is a
linearly independent set of forms in $(I_{Z'}/I_Z)_t$, whence $s
\leq \deg Z - \deg Z'$. If necessary, relabel the $F_i$'s so that
$\deg F_1 \leq \deg F_2 \leq \cdots \leq \deg F_s$. Suppose that there
exists $c_1,\ldots,c_s$ in $k$, not all zero, such that
\[c_1\overline{x_0^{t_1}F}_1+ \cdots + c_s\overline{x_0^{t_s}F}_s = \overline{c_1x_0^{t_1}F_1 +
\cdots +c_sx_0^{t_s}F_s} = \overline{0}.\]
Let $r$ be the largest
integer in $\{1,\ldots,s\}$ such that $c_r \neq 0$.  Hence
\begin{eqnarray*}
 \overline{c_1x_0^{t_1}F_1 +
\cdots +c_sx_0^{t_s}F_s} &=&  \overline{c_1x_0^{t_1}F_1 + \cdots
+c_rx_0^{t_r}F_r}\\
& =& \overline{x_0^{t_r}}
 \overline{(c_1x_0^{t_1-t_r}F_1 +
\cdots +c_rF_r)} = \overline 0.
\end{eqnarray*}
Note that by our relabeling,
$t_i-t_r \geq 0$ for $i=1,\ldots,r$.  Because $x_0$ is a nonzero
divisor on $R/I_Z$, we must have $c_1x_0^{t_1-t_r}F_1 + \cdots
+c_rF_r = H \in I_Z$. But this implies that
\[F_r = (c_r)^{-1}c_rF_r = (c_r)^{-1}
(-c_1x_0^{t_1-t_r}F_1 - \cdots
-c_{r-1}x_0^{t_{r-1}-t_r}F_{r-1} + H).\]
But then
$\overline{F_r} \in
(\overline{F}_1,\ldots,\overline{F}_{r-1},\overline{F}_{r+1},
\ldots,\overline{F}_s)$, whence
\[(\overline{F}_1,\ldots,\overline{F}_{r-1},\overline{F}_{r+1},
\ldots,\overline{F}_s) = (\overline{F}_1,\ldots,\overline{F}_{r},\ldots,\overline{F}_s),\]
thus contradicting the minimality of $s$.

We now show that if $s < \deg Z - \deg Z'$, we can derive a
contradiction, and hence $s= \deg Z - \deg Z'$.  As above, fix $t$
to be any integer such that $\dim_k (I_{Z'}/I_Z)_t = \deg Z - \deg
Z'$.  If $s < \deg Z - \deg Z'$, then there exists some
$\overline{H} \in (I_{Z'}/I_Z)_t$ that is not in the span of
$\left\{\overline{x_0^{t_1}F}_1,\ldots,\overline{x_0^{t_s}F}_s\right\}$.
On the other hand, because $\overline{H} \in (I_{Z'}/I_Z)_t$,
there exist homogeneous forms $G_1,\ldots,G_s$  in $R$ such that
\[\overline{H} = \overline{G_1F_1 + \cdots +G_sF_s} ~~\mbox{with $\deg G_i = t -\deg F_i$}.\]
Each $G_i$ can be rewritten as
\[G_i = c_ix_0^{t-\deg F_i} + G'_i(x_0,\ldots,x_n)\]
where $G'_i = G'_i(x_0,\ldots,x_n) \in (x_1,\ldots,x_n) = I_P$.
We then have
\[\overline{G_iF_i} = \overline{c_ix_0^{t-\deg F_i}F_i} + \overline{G'_iF_i}
= \overline{c_ix_0^{t-\deg F_i}F_i}\] since $G'_iF_i \in I_Z$ for
all $i$.  To see this, note that for any $P_j \in \supp(Z)
\setminus \{P\}$, we already have $F_i \in I_{P_j}^{m_j}$, and
thus $G'_iF_i \in   I_{P_j}^{m_j}$.  On the other hand, since
$G'_i \in I_P$ and $F_i \in I_P^{m-1}$, we get $G'_iF_i \in
I_P^m$.  As a consequence
\[\overline{H} = \overline{G_1F_1 + \cdots +G_sF_s} = \overline{c_1x_0^{t-\deg F_1}F_1
+ \cdots + c_sx_0^{t-\deg F_s}F_s}.\] But this implies that
$\overline{H}$ is in the span of
$\{\overline{x_0^{t_1}F}_1,\ldots,\overline{x_0^{t_s}F}_s\}$,
contradicting our choice of $\overline{H}$.  Hence $s = \deg Z -
\deg Z'$, as desired.
\end{proof}

\begin{remark}
The number $\nu = \deg Z - \deg Z'$ can be computed directly
from the degree formula;  precisely
\begin{eqnarray*}
\deg Z - \deg Z' &= &\deg  m_iP_i - \deg (m_i-1)P_i \\
&=& \binom{m_i+n-1}{n}-\binom{m_i-1+n-1}{n} =
\binom{m_i+n-2}{n-1}.
\end{eqnarray*}
\end{remark}

In light of the above theorem, we can introduce a minimal
set of separators:

\begin{definition} Let $Z$ and $Z'$ be as in Convention \ref{convention}.
 If $\{F_1,\ldots,F_{\nu}\}$ is a set of polynomials that
satisfies conditions (a) and (b) of Theorem \ref{theorem1}, then we call
$\{F_1,\ldots,F_{\nu}\}$ a {\it minimal set of separators of $P_i$
of multiplicity $m_i$}.
\end{definition}

Our next step is to use this minimal set of separators to
develop a fat point analog for the degree of a point.
We
begin with a lemma.

\begin{lemma}  \label{lemma1}
Let $Z$ and $Z'$ be as in Convention \ref{convention}
with associated
ideals $I_Z$ and $I_Z'$, respectively.
Suppose that $\{F_1,\ldots,F_{\nu}\}$ is a minimal set of
separators of $P_i$ of multiplicity $m_i$.  Then, for all $t \geq 0$,
\[ \dim_k (I_{Z'}/I_Z)_t = |\{F_i ~|~ \deg F_i \leq t\}|.\]
\end{lemma}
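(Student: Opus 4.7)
The plan is to mirror the linear-independence/spanning argument used in the proof of Theorem~\ref{theorem1}, but now applied in an arbitrary fixed degree $t$ instead of for $t \gg 0$. After a linear change of coordinates (as in that proof), I may assume $P_i = [1:0:\cdots:0]$, so $I_{P_i} = (x_1,\ldots,x_n)$, and that no point of $\supp(Z)$ lies on the hyperplane $x_0 = 0$; then $L = x_0$ is a non-zero divisor on $R/I_Z$. Relabel so that $\deg F_1 \leq \deg F_2 \leq \cdots \leq \deg F_\nu$ and let $r$ be the number of $F_j$ with $\deg F_j \leq t$, so $r = |\{F_j \mid \deg F_j \leq t\}|$. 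I will show that
\[\left\{\overline{x_0^{t-\deg F_1} F_1}, \ldots, \overline{x_0^{t-\deg F_r} F_r}\right\}\]
is a basis of $(I_{Z'}/I_Z)_t$, which gives the claim.

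For linear independence, suppose $\sum_{j=1}^{r} c_j \overline{x_0^{t-\deg F_j} F_j} = \overline{0}$ and let $r' \leq r$ be the largest index with $c_{r'} \neq 0$. Since $\deg F_j \leq \deg F_{r'}$ for $j \leq r'$, I may factor out $\overline{x_0^{t-\deg F_{r'}}}$ from the relation and then apply that $x_0$ is a non-zero divisor on $R/I_Z$ to conclude that $c_1 x_0^{\deg F_{r'} - \deg F_1}F_1 + \cdots + c_{r'} F_{r'} \in I_Z$. Solving for $F_{r'}$ then expresses $\overline{F}_{r'}$ as an $R/I_Z$-linear combination of the remaining $\overline{F}_j$'s, contradicting the minimality asserted in Theorem~\ref{theorem1}.

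For spanning, take $\overline{H} \in (I_{Z'}/I_Z)_t$; since $\{\overline{F}_1,\ldots,\overline{F}_\nu\}$ generates $I_{Z'}/I_Z$ (Theorem~\ref{theorem1}(b)), I may write $H = \sum_{j=1}^r G_j F_j$ with each $G_j$ homogeneous of degree $t - \deg F_j$ (only the $j \leq r$ contribute, as otherwise the degree would be negative). Decompose each $G_j = c_j x_0^{t-\deg F_j} + G_j'$ where $G_j' \in I_{P_i}$. The key point is that $G_j' F_j \in I_Z$, which I verify primary-component by primary-component exactly as in the proof of Theorem~\ref{theorem1}: for $\ell \neq i$, $F_j \in I_{P_\ell}^{m_\ell}$ forces $G_j' F_j \in I_{P_\ell}^{m_\ell}$; and at $P_i$, combining $G_j' \in I_{P_i}$ with $F_j \in I_{P_i}^{m_i-1}$ yields $G_j' F_j \in I_{P_i}^{m_i}$. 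Thus $\overline{H} = \sum_{j=1}^r c_j \overline{x_0^{t-\deg F_j} F_j}$ lies in the span of the candidate basis, as required.

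The main obstacle I anticipate is purely organizational: one must ensure that the re-indexing and the decomposition $G_j = c_j x_0^{t-\deg F_j} + G_j'$ behave uniformly in $t$, and that the entire argument (in particular the non-zero divisor trick and the primary-component check) goes through for every $t \geq 0$ rather than just $t \gg 0$ as in Theorem~\ref{theorem1}. Nothing deeper is needed, because the generation and minimality properties of $\{\overline{F}_1,\ldots,\overline{F}_\nu\}$ are degree-independent, and the $x_0$-trick converts the ideal-theoretic information supplied by Theorem~\ref{theorem1} into the degree-by-degree statement being proved.
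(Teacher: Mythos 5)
Your proposal is correct and follows essentially the same approach as the paper's proof, which itself proceeds by observing that the argument of Theorem~\ref{theorem1} (nonzero-divisor $x_0$, decomposition $G_j = c_j x_0^{t-\deg F_j} + G_j'$, and the primary-component check that $G_j' F_j \in I_Z$) applies degree by degree to show $\left\{\overline{x_0^{t-\deg F_1}F_1},\ldots,\overline{x_0^{t-\deg F_r}F_r}\right\}$ is a basis of $(I_{Z'}/I_Z)_t$. The only small point to tidy is that generation gives $\overline{H}=\overline{G_1F_1+\cdots+G_rF_r}$ in $R/I_Z$ rather than an equality $H=\sum G_jF_j$ in $R$, but this does not affect the argument.
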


\begin{proof}  Assume that $P := P_i = [1:0:\cdots:0]$ and that the hyperplane
defined by $L = x_0$ is a nonzero divisor on $R/I_Z$.  We can now
argue as in the proof of Theorem \ref{theorem1} to get the
conclusion. Indeed, fix any integer $t$, and let $F_1,\ldots,F_r$
be all the forms in the set $\{F_1,\ldots,F_\nu\}$ with $\deg F_i \leq t$.
Then $\left\{\overline{x_0^{t-\deg
F_1}F}_1,\ldots,\overline{x_0^{t-\deg F_r}F}_r\right\}$ is a linearly
independent set of elements in $(I_{Z'}/I_Z)_t$.

Furthermore, this set must span $(I_{Z'}/I_Z)_t$.  Indeed, for any
$\overline{H} \in (I_{Z'}/I_Z)_t$, there exists homogeneous
forms $G_1,\ldots,G_r$ such that
\[\overline{H} = \overline{G_1F_1 + \cdots + G_rF_r} ~~\mbox{with $\deg G_i = t-\deg F_i$}.\]
Note, by degree considerations, we do not need to concern
ourselves with the forms $F_{r+1},\ldots,F_\nu$.  Just as in proof
of Theorem \ref{theorem1}, we rewrite each $G_i$ as $G_i =
c_ix_0^{t-\deg F_i}  + G'_i$ with $G'_i \in I_P$. This then implies
that
\[\overline{H} = \overline{c_1x_0^{t-\deg F_1}F_1 + \cdots + c_rx_0^{t-\deg F_r}F_r},\]
i.e., $\overline{H}$ is in the span of $\left\{\overline{x_0^{t-\deg
F_1}F}_1,\ldots,\overline{x_0^{t-\deg F_r}F}_r\right\}$.

Because  $\left\{\overline{x_0^{t-\deg
F_1}F}_1,\ldots,\overline{x_0^{t-\deg F_r}F}_r\right\}$ is a basis for
$(I_{Z'}/I_Z)_t$, the conclusion now follows.
\end{proof}

\begin{theorem}  \label{theorem2}
Let $Z$ and $Z'$ be as in Convention \ref{convention}.
Suppose that $\{F_1,\ldots,F_{\nu}\}$ and $\{G_1,\ldots,G_{\nu}\}$
are two minimal sets of separators of $P_i$ of multiplicity $m_i$.
Relabel the $F_i$'s so that $\deg F_1 \leq \cdots \leq \deg
F_{\nu}$, and similarly for the $G_i$'s.  Then
\[(\deg F_1,\ldots,\deg F_{\nu}) = (\deg G_1,\ldots,\deg G_{\nu}).\]
\end{theorem}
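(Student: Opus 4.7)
The plan is to derive the theorem as an almost immediate consequence of Lemma \ref{lemma1}. The key observation is that the dimension $\dim_k (I_{Z'}/I_Z)_t$ depends only on the ideals $I_Z$ and $I_{Z'}$, and not on any particular choice of generators. Lemma \ref{lemma1}, however, expresses this same dimension in terms of the degrees of a chosen minimal set of separators. Comparing these two expressions for the two given minimal sets will force the degrees to match.

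First, I would apply Lemma \ref{lemma1} to the minimal set $\{F_1,\ldots,F_\nu\}$ to get
\[
\dim_k (I_{Z'}/I_Z)_t = |\{i \mid \deg F_i \leq t\}| \qquad \text{for all } t \geq 0,
\]
and then apply it to $\{G_1,\ldots,G_\nu\}$ to get the analogous identity with the $G_j$'s. Equating the two right-hand sides yields
\[
|\{i \mid \deg F_i \leq t\}| = |\{j \mid \deg G_j \leq t\}| \qquad \text{for all } t \geq 0.
\]

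Next, I would take first differences in $t$ to recover the number of separators of each exact degree: for every $t$,
\[
|\{i \mid \deg F_i = t\}| = |\{j \mid \deg G_j = t\}|.
\]
This shows that the two multisets of degrees coincide. Since both tuples have been arranged in non-decreasing order and have the same length $\nu$, they must be equal termwise, giving $(\deg F_1,\ldots,\deg F_\nu) = (\deg G_1,\ldots,\deg G_\nu)$.

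There is no genuine obstacle here, as the substantive work has already been carried out in Theorem \ref{theorem1} and Lemma \ref{lemma1}. The only thing to emphasize in the write-up is that Lemma \ref{lemma1} applies equally well to any minimal set of separators, so the count $|\{i \mid \deg F_i \leq t\}|$ is an invariant of the pair $(I_Z, I_{Z'})$ rather than of the chosen generators.
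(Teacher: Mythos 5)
Your proposal is correct and follows essentially the same route as the paper: both deduce the result directly from Lemma \ref{lemma1}, observing that the count $|\{i \mid \deg F_i \leq t\}| = \dim_k (I_{Z'}/I_Z)_t$ is an invariant of the pair $(I_Z, I_{Z'})$ and hence the same for any minimal set of separators. Your write-up simply spells out the elementary "take first differences and compare sorted tuples" step that the paper leaves implicit under "follows immediately."
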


\begin{proof}
This follows immediately from Lemma \ref{lemma1} since we must
have
\[|\{F_i ~|~ \deg F_i \leq t \}| = |\{G_i ~|~ \deg G_i \leq t\}|\]
for all integers $t \geq 0$.
\end{proof}

Using Theorem \ref{theorem2}, we can
define a fat point analog for the degree of a point.

\begin{definition}  Let $\{F_1,\ldots,F_{\nu}\}$ be any minimal
set of separators of $P_i$ of multiplicity $m_i$, and relabel so
that $\deg F_1 \leq \cdots \leq \deg F_{\nu}$.  Then the {\it
degree of the minimal separators of $P_i$ of multiplicity $m_i$},
denoted $\deg_Z(P_i)$, is the tuple
\[\deg_Z(P_i) = (\deg F_1,\ldots, \deg
F_{\nu}).\]
\end{definition}

\begin{remark}
When all the multiplicities of $Z$ are one, then $\nu = 1$,
and $\deg_Z(P_i) = (\deg F_1)$ where $F_1$ is a minimal
separator of $P_i$ of multiplicity of $m_i = 1$.  From the definition,
we observe that $F_1$ passes through all the points of $Z = \supp(Z)$
except the point $P_i$, i.e., $F_1$ is a minimal separator of $P_i$ in
the traditional sense.
\end{remark}

We now illustrate some of the above ideas with the following two examples.

\begin{example}\label{example1}
Suppose that $Z = mP$ is a single fat point of multiplicity $m \geq 2$ in
$\pr^n$.  We can therefore assume that $I_P = (x_1,\ldots,x_n)$,
and $I_Z = I_P^m$.  In this case, all the monomials
of degree $m-1$ in the variables $\{x_1,\ldots,x_n\}$ form
a minimal set of separators of $P$ of multiplicity $m$
since
\[I_{Z'}/I_Z = \left.\left(\left\{~~\overline{M} ~\right|~ M = x_1^{a_1}\cdots x_n^{a_n} ~~
\mbox{with $a_1 + \cdots + a_n = m-1$}\right\}\right).\]
Thus, $\deg_Z(P) = (\underbrace{m-1,\ldots,m-1}_{\binom{m+n-2}{n-1}})$.
\end{example}

\begin{example}\label{example2}
Let $F,G \in R=k[x,y,z]$ be two generic forms with $\deg F = 2$
and $\deg G = 3$.  Then $I = (F,G)$ defines a complete intersection
of six reduced points $\{P_1,\ldots,P_6\}$ in $\pr^2$ of type $(2,3)$.  Because
$I$ is a complete intersection, the ideal $I^2 = (F,G)^2$
is the defining ideal of the set of double points:
\[Z = 2P_1 + \cdots + 2P_6 \subseteq \pr^2.\]
Let $Z' = 1P_1 + 2P_2 + \cdots + 2P_6$, and let $I_Z$ and $I_{Z'}$ be the
associated ideals.  The Hilbert functions of $Z$ and $Z'$
are, respectively,
\[
\begin{array}{c|rrrrrrrr}
t & 0 & 1& 2 &3 &4 &5 & 6 & 7  \\
H_Z(t) &1 & 3 & 6 & 10 & 14 & 17 & 18 & \rightarrow\\
H_{Z'}(t)&1 & 3 & 6 & 10 & 14 & 16 & 16 & \rightarrow\\
\end{array}
\]
From the above Hilbert functions, we can determine
$\dim_k(I_{Z'}/I_Z)_t = H_Z(t) - H_{Z'}(t)$ for all $t$.  By appealing
to Lemma \ref{lemma1}, we then obtain $\deg_Z(P_1) = (5,6)$.
The connection between the Hilbert functions of $H_Z$ and $H_{Z'}$
and the tuple $\deg_Z(P_1)$ will be highlighted in the next section.
\end{example}

As we shall see in the later sections, information about
$Z'$ can be obtained from $Z$ and $\deg_Z(P_i)$.  By reiterating
this process, we can then start from any fat point scheme,
and successively reduce the multiplicity of any fat point by one to
obtain information about the subschemes of $Z$ that are also
fat point schemes.  It therefore makes sense to develop
some suitable notation and definitions to carry out this iteration.
We end this section by working out these details.

We begin by introducing some more notation that describes
the scheme after we have dropped the multiplicity of $P_i$
by any integer $h \in \{0,\ldots,m_i\}$.

\begin{definition}\label{iteration}
Let $Z=m_1P_{1}+ \cdots + m_sP_{s}$ be a fat point scheme in
$\pr^n$ whose support is  $X=\{P_{1},\ldots,P_{s}\}$.
If we fix an $i \in \{1,\ldots,s\}$,
then for every $h\in \{0,\ldots, m_i\}$ we define
\[{Z}_{m_{i}-h}(P_i)=m_1P_{1}+ \cdots + m_{i-1}P_{i-1}+(m_{i}-h)P_i+
m_{i+1}P_{i+1}+\cdots+m_sP_s.\]
\noindent
We shall write
${Z}_{m_{i}-h}$ when $P_i$ is understood.
\end{definition}

Note that what we called $Z$ and $Z'$ in Convention \ref{convention}
are denoted $Z_{m_i}$ and $Z_{m_i-1}$ with respect to the new notation.
If $h=m_i$, then
\[Z_0={Z}_{0}(P_i) =m_1P_1+\cdots+m_{i-1}P_{i-1}+m_{i+1}P_{i+1}+\cdots+m_{s}P_s\]
is a scheme of fat points whose support is $\supp(Z) \setminus
\{P_{i}\}.$ We can now introduce the degree of the minimal separators
at various levels, where the level keeps track of how much we have
reduced the multiplicity.

\begin{definition}
Suppose that $Z = m_1P_1 + \cdots + m_iP_i + \cdots + m_sP_s$. For
$h=1,\dots,m_i$, the {\it degree of the minimal separators of
$P_i$ of multiplicity $m_i$ at level $h$}, is
$\deg_{Z_{m_i-h+1}}(P_i)$.
\end{definition}

When $h=1$, $\deg_{Z_{m_i-h+1}}(P_i) = \deg_Z(P_i)$, so we can
view $\deg_Z(P_i)$ as the degree of the minimal separators of
$P_i$ of multiplicity $m_i$ at level $1$. We can now combine all
degrees at each level to define the minimal separating set of a
fat point.

\begin{definition}  Let $Z = m_1P_1 + \cdots + m_iP_i + \cdots + m_sP_s$.
The {\it minimal separating set of the fat point $m_iP_i$} is the
set
\[{\sf DEG}_Z(m_iP_i) = \{\deg_{Z_1}(P_i),\ldots,\deg_{Z_{m_i}}(P_i)\}.\]
\end{definition}

\begin{remark}
Note that $\deg_{Z_{1}}(P_i)$ has only one entry and it represents
the minimal degree of a form that passes through all the points
$P_j$ of $Z$ with multiplicity at least $m_j$ with $j\ne i$, but
not through $P_i$. When $m_i = 1$, the minimal separating set of
the fat point $1P_i$, which in this case is a reduced point, is
the set ${\sf DEG}_Z(1P_i) = \{\deg_{Z_1}(P_i)\}$, and this
corresponds to the separator degree of a reduced point $P_i$  as
given in the introduction.
\end{remark}

\section{Hilbert functions and separators}

In this short section we explain how to use $\deg_Z(P_i)$
to compare the Hilbert functions of $Z$ and $Z'$.
We continue to use Convention \ref{convention}.
Our main result specializes
to a result of Geramita, et al. \cite{GMR} when
all the multiplicities are one.

At the core of the following theorem is Lemma \ref{lemma1}
which computes the dimension of $(I_{Z'}/I_Z)_t$ for all $t$.
Recall that $\Delta H_Z$ denotes the first difference function.
In what follows, we write
$a \in (a_1,\ldots,a_n)$ to mean that $a$ appears in the tuple
$(a_1,\ldots,a_n)$.

\begin{theorem}  \label{hilbertfunction}
Let $Z$ and $Z'$ be as in Convention
\ref{convention}.  Suppose that $\deg_Z(P_i) = (d_1,\ldots,d_{\nu})$
where $\nu = \deg Z - \deg Z'$.
Then for all $t \in \N$,
\[\Delta H_{Z'}(t)  = \Delta H_Z(t) - |\{d_j \in (d_1,\ldots,d_{\nu}) ~|~ d_j =t \}|.\]
\end{theorem}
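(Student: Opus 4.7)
The plan is to derive the formula for $\Delta H_{Z'}$ from a two-step argument: first obtain a closed form for $H_Z(t) - H_{Z'}(t)$ via the short exact sequence relating $I_Z$ and $I_{Z'}$, and then pass to first differences to isolate the summands equal to $t$.

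First I would write down, for each $t \in \N$, the short exact sequence of $k$-vector spaces
\[ 0 \longrightarrow (I_{Z'}/I_Z)_t \longrightarrow (R/I_Z)_t \longrightarrow (R/I_{Z'})_t \longrightarrow 0, \]
which is exactly the one used in the proof of Theorem \ref{theorem1}. Taking dimensions yields
\[ H_Z(t) - H_{Z'}(t) = \dim_k (I_{Z'}/I_Z)_t \quad \text{for all } t \geq 0. \]
Now I invoke Lemma \ref{lemma1}, which identifies the right-hand side as $|\{F_j \mid \deg F_j \leq t\}|$ for any minimal set of separators $\{F_1,\ldots,F_\nu\}$ of $P_i$ of multiplicity $m_i$. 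Since by definition $\deg_Z(P_i) = (d_1,\ldots,d_\nu) = (\deg F_1,\ldots,\deg F_\nu)$ (after sorting), this gives
\[ H_Z(t) - H_{Z'}(t) = |\{d_j \in (d_1,\ldots,d_\nu) \mid d_j \leq t\}| =: N(t). \]

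The second step is purely formal: apply the first difference operator. From $H_Z(t) - H_{Z'}(t) = N(t)$ and the analogous identity $H_Z(t-1) - H_{Z'}(t-1) = N(t-1)$ (using the convention $H_Y(t)=0$ for $t<0$, so that the identity is trivially valid for small $t$), subtraction gives
\[ \Delta H_Z(t) - \Delta H_{Z'}(t) = N(t) - N(t-1) = |\{d_j \in (d_1,\ldots,d_\nu) \mid d_j = t\}|, \]
which is the claimed equality after rearranging.

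There is no real obstacle here: both ingredients, namely the short exact sequence and Lemma \ref{lemma1}, are already established, and the rest is a telescoping computation. If anything, the only point worth double-checking is the base case of the first-difference identity (that $N(-1) = 0$ and $H_Y(-1) = 0$ agree), so that the formula holds uniformly for all $t \in \N$ rather than only for $t \geq 1$.
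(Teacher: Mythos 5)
Your argument is correct and follows essentially the same route as the paper: both proofs rest on the short exact sequence $0 \rightarrow (I_{Z'}/I_Z)_t \rightarrow (R/I_Z)_t \rightarrow (R/I_{Z'})_t \rightarrow 0$ together with Lemma \ref{lemma1}, and then take first differences. The only cosmetic difference is the order in which you apply the lemma and the difference operator; the computation is otherwise identical.
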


\begin{proof}
For each $t \in \N$, the Hilbert functions of $Z$ and $Z'$ in degree
$t$  are related
via the following short exact sequence of vector spaces:
\[0 \rightarrow (I_{Z'}/I_Z)_t \rightarrow
(R/I_Z)_t \rightarrow (R/I_{Z'})_t \rightarrow 0. \]
Thus,
\begin{eqnarray*}
\Delta H_{Z'}(t) & = & H_{Z'}(t) - H_{Z'}(t-1) \\
& = & (H_{Z}(t) - \dim_k (I_{Z'}/I_Z)_t) - (H_Z(t-1) - \dim_k (I_{Z'}/I_Z)_{t-1}) \\
& = & \Delta H_{Z}(t) -  (\dim_k (I_{Z'}/I_Z)_t -  \dim_k (I_{Z'}/I_Z)_{t-1}).
\end{eqnarray*}
The conclusion now follows from Lemma \ref{lemma1} since
\begin{eqnarray*}
\dim_k (I_{Z'}/I_Z)_t -  \dim_k (I_{Z'}/I_Z)_{t-1} &=& |\{d_j \in \deg_Z(P_i) ~|~
d_j \leq t \}| - |\{d_j \in \deg_Z(P_i) ~|~
d_j \leq t-1 \}| \\
& = &  |\{d_j \in (d_1,\ldots,d_{\nu}) ~|~ d_j =t \}|,
\end{eqnarray*}
thus completing the proof.
\end{proof}

\begin{remark}
Suppose that one knows
two of the following three pieces of information: (1) $H_Z$, (2) $H_{Z'}$, and
$(3)$ $\deg_Z(P_i)$.  It follows from Theorem \ref{hilbertfunction}
that one can also determine the third piece of information.
\end{remark}

\begin{example}  In Example \ref{example1} we calculated $\deg_Z(P)$
when $Z = mP \subseteq \pr^n$.  We use this
information to find the Hilbert function of $Z = 3P$ in
$\pr^2$.  By Theorem \ref{hilbertfunction}
\[\Delta H_{2P}(t) =
\left\{
\begin{array}{ll}
\Delta H_{3P}(t) & \mbox{if $t \neq 2$} \\
\Delta H_{3P}(t) - 3 & \mbox{if $t = 2$}
\end{array}
\right.\]
because $\deg_{3P}(P) = (2,2,2)$.  We now need to find the
Hilbert function of $\Delta H_{2P}$.  Again, appealing to
Theorem \ref{hilbertfunction}, we get
\[\Delta H_{P}(t) =
\left\{
\begin{array}{ll}
\Delta H_{2P}(t) & \mbox{if $t \neq 1$} \\
\Delta H_{2P}(t) - 2 & \mbox{if $t = 1$}
\end{array}
\right.\]
because $\deg_{2P}(P) = (1,1)$.
Since $H_P(t) = 1$ for all $t \in \N$, we use the above
expressions to find
\[\Delta H_{3P} : 1 ~ 2 ~ 3 ~ 0 ~ \rightarrow.\]
It follows that
this recursive procedure can be used to find the Hilbert
function of any single fat point in any projective space.
Indeed, when $Z = mP \subseteq \pr^2$, this procedure
recovers the well known result that
\[\Delta H_{mP} : 1 ~ 2 ~ 3 ~ \cdots ~ m-1 ~ m ~ 0 ~ \rightarrow.\]
\end{example}

When we specialize to the case of reduced points
we recover a result of Geramita, Maroscia, and Roberts.

\begin{corollary}[{\cite[Lemma 2.3]{GMR}}]\label{hilbertonept}
Let $X \subseteq \pr^n$ be a reduced set
of points, and suppose that $P \in X$.  If $X' = X \setminus \{P\}$, then
\[ \Delta H_{{X'}}(t) = \begin{cases}
              \triangle H_{{X}}(t)   & t \neq \deg_X(P) \\
               \triangle H_{{X}}(t)-1 & t = \deg_X(P).
              \end{cases}
\]
\end{corollary}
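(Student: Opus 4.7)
The plan is to deduce the corollary as an immediate specialization of Theorem \ref{hilbertfunction}. First I would set $Z := X$ (viewed as a fat point scheme with all multiplicities equal to $1$) and choose $P_i := P$, so that in the notation of Convention \ref{convention} we have $Z' = X \setminus \{P\} = X'$. Then $\nu = \deg Z - \deg Z'$ simplifies via the degree formula:
\[
\nu = \binom{1+n-1}{n-1} = \binom{n}{n-1} \cdot \tfrac{1}{n} \cdot n = 1,
\]
or more directly, $\deg(1P) - \deg(0P) = 1 - 0 = 1$. Hence the tuple $\deg_Z(P) = (d_1)$ has a single entry.

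Next I would identify $d_1$ with the classical separator degree $\deg_X(P)$. This is exactly the content of the remark preceding Example \ref{example1}: when $m_i = 1$, a minimal separator $F_1$ of $P_i$ of multiplicity $1$ is a form with $F_1 \in I_{X'} \setminus I_X$, i.e., a form vanishing on all points of $X$ except $P$, which is the traditional notion of a separator. Therefore $d_1 = \deg F_1 = \deg_X(P)$.

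Finally, I would plug into Theorem \ref{hilbertfunction}. The theorem gives
\[
\Delta H_{X'}(t) = \Delta H_X(t) - \bigl|\{d_j \in (d_1) \mid d_j = t\}\bigr|.
\]
The cardinality on the right is $0$ when $t \neq d_1 = \deg_X(P)$ and equals $1$ when $t = \deg_X(P)$, which is precisely the case-split in the statement of the corollary.

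There is really no substantive obstacle here: the only thing that needs a sentence of care is matching up the new notion of a minimal separator (from Section 3) with the classical notion used by Geramita--Maroscia--Roberts, but this is handled by the remark in the text. The rest is pure bookkeeping with the degree formula and the statement of Theorem \ref{hilbertfunction}.
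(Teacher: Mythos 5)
Your approach is exactly what the paper intends: the corollary is stated as an immediate specialization of Theorem \ref{hilbertfunction} to the reduced case, and your argument (take $m_i = 1$, observe $\nu = 1$, match $d_1$ with the classical $\deg_X(P)$, plug in) is the right one. The paper itself gives no written proof of this corollary, so there is no competing argument to compare against.

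One small slip: your binomial computation of $\nu$ is off. The remark after Theorem \ref{theorem1} gives $\nu = \binom{m_i + n - 2}{n-1}$, so with $m_i = 1$ you should get $\binom{n-1}{n-1} = 1$, not $\binom{1+n-1}{n-1} = \binom{n}{n-1} = n$; and the cosmetic rewriting $\binom{n}{n-1}\cdot \tfrac{1}{n}\cdot n$ still evaluates to $n$, not $1$. Fortunately your alternative direct computation $\deg(1P) - \deg(0P) = 1 - 0 = 1$ is correct, so the conclusion $\nu = 1$ stands and the rest of the argument goes through unchanged.
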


In the same paper, Geramita, et al. defined a {\it permissible
value} (see \cite[Definition 4.1]{GMR}) and showed that
the degree of every point $P$ is $X$ is a permissible value.  We round
out this section by generalizing the notion of a permissible value
and show that the degree of a minimal set of separators of $P$ of multiplicity
$m$ is also an example of this generalized permissible value.

\begin{definition}\label{sh}
Let $H = \{ b_t \}$, $t \geq 0$ be a zero-dimensional
differentiable $O$-sequence.   That is, $H$ is the Hilbert
function of a zero-dimensional scheme, and its first difference is
also an $O$-sequence (see \cite{GMR}, for example, for the
definition of an $O$-sequence).  Equivalently, if $b_1 = n+1$,
then $H$ is a zero-dimensional differentiable $O$-sequence if its
first difference function $\Delta H$ is the Hilbert function of an
artinian quotient of $k[x_1,\ldots,x_n]$.  Let $\underline{d} =
(d_1,\ldots,d_{\tau})$ be any $\tau$-tuple of positive
integers with $\tau \geq 1$ and
$d_1 \leq \cdots \leq d_{\tau}$. We say that $\underline{d}$ is a
\emph{permissible vector} of length $\tau$ for  $H$ if
$$H_{\underline{d}} =  \left\{b_t - |\{d_j \in (d_1,\ldots,d_{\tau}) ~|~ d_j \leq t\}|\right\}$$
is again a zero-dimensional differentiable $O$-sequence.
The set of all permissible vectors of length $\tau$ with
respect to $H$ shall be denoted by $S_{H,\tau}$.
\end{definition}

Theorem \ref{hilbertfunction} implies that $\deg_Z(P_i)$ is a permissible
vector of $H_Z$.

\begin{corollary}\label{fatpermissH}
Let $Z$ and $Z'$ be as in Convention
\ref{convention}.  Suppose that $\deg_Z(P_i) = (d_1,\ldots,d_{\nu})$
where $\nu = \deg Z - \deg Z'$.
Then
\[\deg_Z(P_i) \in S_{H_Z,\nu}.\]
\end{corollary}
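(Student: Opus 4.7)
The plan is to show that the sequence $H_{\underline{d}}$ associated in Definition \ref{sh} to the tuple $\underline{d} = \deg_Z(P_i) = (d_1,\ldots,d_\nu)$ coincides with the Hilbert function $H_{Z'}$ of the scheme $Z'$ from Convention \ref{convention}. Once that identification is in hand, membership in $S_{H_Z,\nu}$ is automatic because $H_{Z'}$ is manifestly a zero-dimensional differentiable $O$-sequence.

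The identification step proceeds by the same short exact sequence used in the proof of Theorem \ref{hilbertfunction}:
\[
0 \to (I_{Z'}/I_Z)_t \to (R/I_Z)_t \to (R/I_{Z'})_t \to 0,
\]
which yields $H_{Z'}(t) = H_Z(t) - \dim_k (I_{Z'}/I_Z)_t$ for every $t \in \mathbb{N}$. By Lemma \ref{lemma1}, $\dim_k (I_{Z'}/I_Z)_t = |\{F_j \mid \deg F_j \leq t\}| = |\{d_j \in (d_1,\ldots,d_\nu) \mid d_j \leq t\}|$. Substituting gives exactly $H_{\underline{d}}(t) = H_Z(t) - |\{d_j \leq t\}| = H_{Z'}(t)$.

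For the second step, note that $Z'$ is itself a fat point scheme (if $m_i - 1 = 0$, the point $P_i$ simply drops out of the support, per Convention \ref{convention}), so $Z'$ is zero-dimensional. Hence $H_{Z'}$ is an $O$-sequence by Macaulay's theorem, and its first difference $\Delta H_{Z'}$ is the Hilbert function of an Artinian reduction of $R/I_{Z'}$ by a generic linear form (which is a nonzero divisor on $R/I_{Z'}$ since $k$ is infinite), so $\Delta H_{Z'}$ is also an $O$-sequence. Thus $H_{Z'} = H_{\underline{d}}$ meets Definition \ref{sh}, proving $\deg_Z(P_i) \in S_{H_Z,\nu}$.

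There is essentially no obstacle beyond the bookkeeping above; the only degenerate case is when $Z$ is a single reduced point, where $Z'$ is empty and $H_{\underline{d}}$ is the zero function, which trivially fulfills the definition. The corollary is really a repackaging of Theorem \ref{hilbertfunction} in the language of permissible vectors.
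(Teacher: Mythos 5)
Your proof is correct and takes essentially the same route as the paper's: both identify $H_{\underline{d}}$ with $H_{Z'}$ (the paper via Theorem \ref{hilbertfunction}, you by unrolling one step further to the short exact sequence and Lemma \ref{lemma1}, which is the same computation) and then conclude by observing that $H_{Z'}$ is a zero-dimensional differentiable $O$-sequence because $Z'$ is a zero-dimensional scheme. You fill in a bit more justification for that last point and flag the degenerate case $Z' = \emptyset$, but the argument is the paper's argument.
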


\begin{proof} We use the formula for $\Delta H_{Z'}$ in Theorem \ref{hilbertfunction}
to calculate $H_{Z'}$:
\[H_{Z'}(t) = H_Z(t) - |\{d_j \in \deg_Z(P_i) ~|~ d_j \leq t\}|.\]
Since $H_Z$ and $H_{Z'}$ are zero-dimensional differentiable $O$-sequences,
it follows that $\deg_Z(P_i)$ is a permissible vector of length $\nu$ of $H_Z$.
\end{proof}


\section{The degree of a separator and the minimal resolution}

As evident in the previous section, if one knows some information
about $Z$ and the tuple $\deg_Z(P_i)$, one can also obtain
information about $Z'$.  It is therefore useful to know
how to find $\deg_Z(P_i)$.  Abrescia, Bazzotti, and the second author \cite{ABM}
showed that in the case of reduced points in $\pr^2$
(and extended to $\pr^n$ in \cite{Ma3} and \cite{Ba1}), the degree of a point
in $X$ is related to a shift in the last syzygy module in the resolution
of $I_X$.
In this section we will prove a similar result
about $\deg_Z(P_i)$: the entries in this tuple are related
to the shifts in the last syzygy module of the resolution
of $I_Z$.

Before arriving at our main result, we will require a technical lemma
that will be used in the induction step of our next theorem.

\begin{lemma} \label{idealepunto}
Let $Z$ and $Z'$ be as in Convention \ref{convention}.
Let $\{F_1,\ldots,F_{\nu}\}$ be a minimal set of separators
of $P_i$ of multiplicity $m_i$, and furthermore, suppose that
the separators have been relabeled so
that $\deg F_1 \leq \cdots \leq \deg F_{\nu}$.
Then
\begin{enumerate}
\item[(a)] For $j=1,\ldots,\nu$, $(I_Z,F_1,\ldots,F_{j-1}):(F_j) = I_{P_i}$.
\item[(b)] For $j=1,\ldots,\nu$, $(I_Z,F_1,\ldots,F_j)$ is
a saturated ideal.
\end{enumerate}
\end{lemma}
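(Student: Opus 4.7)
The plan is to first establish a structural description of $I_{Z'}/I_Z$ that makes both parts transparent. The key observation is that $I_{P_i}$ annihilates $I_{Z'}/I_Z$: for any $G\in I_{P_i}$ and $H\in I_{Z'}=I_{P_i}^{m_i-1}\cap\bigcap_{k\neq i}I_{P_k}^{m_k}$, one has $GH\in I_{P_i}^{m_i}\cap\bigcap_{k\neq i}I_{P_k}^{m_k}=I_Z$. After the standard change of coordinates placing $P_i=[1:0:\cdots:0]$, so that $I_{P_i}=(x_1,\ldots,x_n)$ and $R/I_{P_i}\cong k[x_0]$, the quotient $I_{Z'}/I_Z$ becomes a graded $k[x_0]$-module. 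Combining Theorem~\ref{theorem1} (which gives $\overline{F}_1,\ldots,\overline{F}_{\nu}$ as a minimal $R$-generating set) with Lemma~\ref{lemma1} (which computes $\dim_k(I_{Z'}/I_Z)_t = |\{l: \deg F_l\le t\}|$), a graded dimension count forces the natural surjection
\[
\bigoplus_{l=1}^{\nu} k[x_0](-\deg F_l)\ \twoheadrightarrow\ I_{Z'}/I_Z,\qquad e_l\mapsto\overline{F}_l,
\]
to be an isomorphism. Hence $I_{Z'}/I_Z$ is a graded free $k[x_0]$-module of rank $\nu$ with basis $\overline{F}_1,\ldots,\overline{F}_{\nu}$.

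With this in hand, part (a) is essentially a coordinate check. The containment $I_{P_i}\subseteq (I_Z,F_1,\ldots,F_{j-1}):(F_j)$ is the annihilator property just established. Conversely, if $GF_j\in (I_Z,F_1,\ldots,F_{j-1})$, then $\overline{G}\cdot\overline{F}_j\in(\overline{F}_1,\ldots,\overline{F}_{j-1})$ inside $I_{Z'}/I_Z$. Under the free decomposition above, $(\overline{F}_1,\ldots,\overline{F}_{j-1})$ is the sum of the first $j-1$ summands, while $\overline{G}\cdot\overline{F}_j$ lies entirely in the $j$-th summand $k[x_0]\cdot\overline{F}_j$; these intersect trivially, forcing $\overline{G}=0$ in $R/I_{P_i}$, i.e.\ $G\in I_{P_i}$.

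For part (b), I would use that saturation of $(I_Z,F_1,\ldots,F_j)$ is equivalent to some element of $\mathfrak{m}=(x_0,\ldots,x_n)$ being a nonzero divisor modulo the ideal, and show $x_0$ is one. Suppose $x_0H\in(I_Z,F_1,\ldots,F_j)$. In $R/I_Z$, the class $\overline{x_0H}$ lies in $(\overline{F}_1,\ldots,\overline{F}_j)\subseteq I_{Z'}/I_Z$. Since $I_{Z'}/I_Z$ is the kernel of $R/I_Z\twoheadrightarrow R/I_{Z'}$ and $x_0$ is a nonzero divisor on $R/I_{Z'}$ (the hyperplane $x_0=0$ misses $\supp(Z')$), we conclude $\overline{H}\in I_{Z'}/I_Z$. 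Expanding $\overline{H}=\sum_{l=1}^{\nu} K_l\overline{F}_l$ in the free basis gives $\overline{x_0H}=\sum_l (x_0K_l)\overline{F}_l$; the vanishing of the components with $l>j$, combined with $x_0$ being a nonzero divisor in $k[x_0]$, forces $K_l=0$ for $l>j$. Hence $\overline{H}\in(\overline{F}_1,\ldots,\overline{F}_j)$ and $H\in(I_Z,F_1,\ldots,F_j)$.

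The main obstacle is establishing the freeness of $I_{Z'}/I_Z$ as a $k[x_0]$-module; once that structural fact is in hand, both (a) and (b) fall out by tracking coordinates in the direct sum. A more hands-on alternative would prove (a) by a direct contradiction using the minimality of $\{\overline{F}_l\}$ together with $x_0$ being a nonzero divisor on $R/I_Z$, and then attempt (b) by induction on $j$ via the short exact sequence $0\to R/I_{P_i}(-\deg F_j)\to R/(I_Z,F_1,\ldots,F_{j-1})\to R/(I_Z,F_1,\ldots,F_j)\to 0$ supplied by (a); however, the inductive step ultimately reduces to checking $F_j\notin(I_Z,F_1,\ldots,F_{j-1},x_0)$, which is essentially equivalent to the same freeness statement.
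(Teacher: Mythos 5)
Your proof is correct, and it is organized around a genuinely different central idea from the paper's. You isolate a structural lemma---that after the coordinate change $I_{Z'}/I_Z$ is a \emph{graded free $k[x_0]$-module} on the basis $\overline{F}_1,\ldots,\overline{F}_\nu$, obtained by combining the annihilator fact $I_{P_i}\cdot(I_{Z'}/I_Z)=0$ with the dimension count of Lemma \ref{lemma1}---and then both (a) and (b) fall out as direct-sum coordinate computations in that free module, plus the observation that $x_0$ being a nonzerodivisor on $R/(I_Z,F_1,\ldots,F_j)$ forces saturation. The paper never states this freeness explicitly; instead it reproves the linear independence inherent in it inside each part separately, writing generic elements as $c x_0^{d-d_j}+(\text{something in } I_{P_i})$, absorbing the $I_{P_i}$-parts into $I_Z$, and then invoking linear independence of $\overline{x_0^{d-d_l}F_l}$ (which is exactly graded freeness of the module). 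Both arguments are correct; yours is more conceptual and avoids repeating the same coordinate computation twice, while the paper's is more elementary and self-contained, working entirely with explicit coset representatives. One small suggestion: in part (a), when you conclude ``$\overline{G}\cdot\overline{F}_j$ lies in the $j$-th summand,'' it is worth remarking explicitly that you have first replaced $G$ by its image $\overline{G}\in R/I_{P_i}\cong k[x_0]$, using that the $R$-action on $I_{Z'}/I_Z$ factors through $k[x_0]$; as written the step is correct but slightly compressed.
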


\begin{proof}
We set $d_j := \deg F_j$ for $j = 1, \ldots, \nu$.

(a) To prove the inclusion $I_{P_i}\subseteq (I_{Z}, F_1,\ldots,F_{j-1}):(F_j)$,
note that $F_j \in I_{P_l}^{m_l}$ for all $l \neq i$, and for
$l = i$, $F_jI_{P_i} \subseteq I_{P_i}^{m_i}$ since $F_j \in I_{P_i}^{m_i-1}$.  Hence
$F_jI_{P_i} \subseteq I_Z \subseteq (I_Z,F_1,\ldots,F_{j-1})$.

To prove the other inclusion, we do
a change of coordinates so that $P_i = [1:0:\cdots:0]$, and
so that $x_0$ is a nonzero divisor on $R/I_Z$.  Note
that $I_{P_i} = (x_1,\ldots,x_n)$.
Suppose that
$G \in (I_{Z},F_1,\ldots,F_{j-1}):(F_j)$.
So, $GF_j \in (I_Z,F_1,\ldots,F_{j-1})$.  Then there are forms
$A_1,\ldots,A_{j-1} \in R$ and $A \in I_Z$
such that
\begin{equation}\label{GF}
GF_j = A + A_1F_1 + \cdots + A_{j-1}F_{j-1} \Leftrightarrow
GF_j - (A_1F_1 + \cdots + A_{j-1}F_{j-1}) = A \in I_Z.
\end{equation}
We can take $G,A_1,\ldots,A_{j-1}$ to be homogeneous.  Furthermore,
if $\deg A  = d$, then $\deg G = d- d_j$ and
$\deg A_l = d- d_l$ for $l=1,\ldots,j-1$.
Furthermore, $d-d_l \geq 0$ for $l = 1,\ldots,j-1$
by our ordering of
the minimal separators.
We can
also write
\[G = cx_0^{d-d_j} + G'~~\mbox{and}~~A_l = a_lx_0^{d-d_l} + A'_l\]
where $G',A_1',\ldots,A_{j-1}' \in I_{P_i} = (x_1,\ldots,x_n)$.
Our goal is to show that $c=0$, whence $G \in I_{P_i}$.

It follows that $G'F_{j-1} \in I_{P_i}^{m_i}$, and similarly
$A'_lF_l \in I_{P_i}^{m_i}$ for $l=1,\ldots,j-1$.  Because
$F_1,\ldots,F_j \in I_{P_l}^{m_l}$ for $l \neq i$, we get
\[G'F_j - (A'_1F_1 + \cdots + A'_{j-1}F_{j-1}) \in I_Z.\]
If we subtract this expression from (\ref{GF}), we get
\[cx_0^{d-d_j}F_j - (a_1x_0^{d-d_1}F_1 + \cdots + a_{j-1}x_0^{d-d_{j-1}}F_{j-1}) \in I_Z.\]
But then in $(I_{Z'}/I_Z)_d$ we have
\begin{equation}\label{c}
\overline{cx_0^{d-d_j}F_j - (a_1x_0^{d-d_1}F_1 + \cdots + a_{j-1}x_0^{d-d_{j-1}}F_{j-1})}
= \overline{0}.
\end{equation}
But by adapting the proof given in Theorem \ref{theorem1},
(this is where you require that $x_0$ to be a nonzero divisor)
the elements $\left\{\overline{x_0^{d-d_1}F_1},
\ldots, \overline{x_0^{d-d_{j}}F_{j}}\right\}$ are linearly independent
in $(I_{Z'}/I_Z)_d$.  Thus equation (\ref{c}) holds only if
$c = 0$.  But this means that
$G = G' \in I_{P_i}$, as desired.

To prove (b), we do a proof by contradiction.  So,
suppose that there exists a $j$ such that
$(I_Z,F_1,\ldots,F_j)$ is not saturated.
As above, we take $P_i = [1:0:\cdots:0]$
and $x_0$ to be a nonzero divisor.
The saturation of $(I_Z,F_1,\ldots,F_j)$, denoted
$(I_Z,F_1,\ldots,F_j)^{\operatorname{sat}}$, is given by
\[(I_Z,F_1,\ldots,F_j)^{\operatorname{sat}} =
(I_Z,F_1,\ldots,F_j):(x_0,\ldots,x_n)^{\infty}.\]

Now suppose that there exists a $G \in (I_Z,F_1,\ldots,F_j)^{\operatorname{sat}}
\setminus (I_Z,F_1,\ldots,F_j)$.  It then follows that
$Gx_0^{t} \in (I_Z,F_1,\ldots,F_j)$ for $t \gg 0$.
For any $P_l \in \supp(Z) \setminus \{P_i\}$, we have
$Gx_0^{t} \in I_{P_l}^{m_l}$ since $(I_Z,F_1,\ldots,F_j) \subseteq I_{P_l}^{m_l}$.
Because $x_0$ is a nonzero divisor on $R/I_Z$, $x_0 \not\in I_{P_l}$,
Thus, no power of $x_0$ belongs to any $I_{P_l}^{m_l}$.  This
means no power of $x_0^t$ belongs to $I_{P_l}^{m_l}$, and
thus, by Lemma \ref{primarylemma}, $G \in I_{P_l}^{m_l}$ since
$I_{P_l}^{m_l}$ is a primary ideal.

On the other hand, since $(I_Z,F_1,\ldots,F_j) \subseteq I_{P_i}^{m_i-1}$,
we have $Gx_0^{t} \in I_{P_i}^{m_i-1}$, and arguing as above,
we must have $G \in I_{P_i}^{m_i-1}$.  Thus, $G \in I_{Z'}$,
or in other words, $\overline{G} \neq \overline{0}$ in $(I_{Z'}/I_Z)$.
(If $\overline{G} = {\overline{0}}$, that would mean
$G \in I_Z \subseteq (I_Z,F_1,\ldots,F_j)$, contradicting our
choice of $G$.)

We then have
\[\overline{G} = \overline{c_1x_0^{d-d_1}F_1 + \cdots + c_{\nu}x_0^{d-d_{\nu}}F_{\nu}}\]
for some constants $c_1,\ldots,c_{\nu}$, where the constant
is zero if $d-d_{\nu} < 0$.  There then must exist some $A \in I_Z$,
such that
\[G - (c_1x_0^{d-d_1}F_1 + \cdots + c_{\nu}x_0^{d-d_{\nu}}F_{\nu}) = A \in I_Z.\]
Rearranging gives us
\begin{equation}\label{G}
G = A + (c_1x_0^{d-d_1}F_1 + \cdots + c_{\nu}x_0^{d-d_{\nu}}F_{\nu})
\end{equation}
and thus,
\begin{equation}\label{firsteq}
Gx_0^t = Ax_0^t + (c_1x_0^{d-d_1+t}F_1 + \cdots + c_{\nu}x_0^{d-d_{\nu}+t}F_{\nu}).
\end{equation}
But $Gx_0^t \in (I_Z,F_1,\ldots,F_j)$, so we can also write it
as
\[Gx_0^t = B + B_1F_1 + \cdots + B_jF_j\]
with $B \in I_Z$ and $B_1,\ldots,B_j \in R$.

We can rewrite
each $B_l$ for $l=1,\ldots,j$ as
\[B_l = b_lx_0^{d-d_l+t}+B'_l ~~\mbox{with $B'_l \in I_{P_i} = (x_1,\ldots,x_n)$.}\]
Since
$B'_lF_l \in I_{P_i}^{m_i}$ and $F_l \in I_{P_r}^{m_r}$ for all $P_r \in
\supp(Z)\setminus \{P_i\}$, we can write
$Gx_0^t$ has
\begin{equation}\label{secondeq}
Gx_0^t = B' + b_1x_0^{d-d_i+t}F_1 + \cdots + b_jx_0^{d-d_j+t}F_j
~~\mbox{with $B' \in I_Z$,}
\end{equation}
that is, the terms $B'_lF_l$ get absorbed into the $B'$.
Setting the expressions (\ref{firsteq}) and (\ref{secondeq})
equal to each other and rearranging, we get
\small
\[(c_1-b_1)x_0^{d-d_1+t}F_1 + \cdots + (c_j-b_j)x_0^{d-d_j+t}F_j
+ c_{j+1}x_0^{d-d_{j+1}+t}F_{j+1} + \cdots + c_{\nu}x_0^{d-d_{\nu}+t}F_{\nu} \in I_Z.\]
\normalsize
But if we now consider the class of this element in $I_{Z'}/I_Z$,
this element is $\overline{0}$.  However
the elements $\left\{\overline{x_0^{d-d_1+t}F_1},
\ldots, \overline{x_0^{d-d_{\nu}+t}F_{\nu}}\right\}$ form a linear
independent set (we omit any term with $d-d_i+t < 0$).
So $c_1-b_1 = \cdots = c_j-b_j = c_{j+1} = \cdots = c_{\nu} = 0$,
or in other words, $c_l = b_l$ for $l =1,\ldots,j$,
and zero for the remaining $c_l$'s.  But by (\ref{G}),
this implies that $G \in (I_Z,F_1,\ldots,F_j)$
contradicting our choice of $G$.
\end{proof}

\begin{remark} A different proof of Lemma \ref{idealepunto} (b),
can be obtained by using Proposition 3.13 and Remark 3.14 in
\cite{Gu2}.
\end{remark}

\begin{remark}\label{HFquasi}
Although $(I_Z,F_1,\ldots,F_j)$ is saturated for all $j$,
it does not define a fat point scheme.  It, however,
defines a scheme of degree $\deg Z - j$.  If we let
$W_j$ define the scheme defined by this ideal,
then $W_0,\ldots,W_{\binom{m+n-2}{n-1}}$ are all the
``intermediate'' schemes between $Z'$ and $Z$,
i.e.,
\[Z'= W_{\binom{m+n-2}{n-1}} \subset \cdots \subset W_1 \subset W_0 = Z.\]
\end{remark}

We will now prove the main theorem of this section:  given a
minimal graded free resolution of $I_Z$, the
entries of $\deg_{Z}(P_i)$ appear among the degrees of the last syzygies
after shifting by $n$.

\begin{theorem}\label{teofat}
Let $Z,Z'$ be the fat point schemes of $\pr^n$ as in Convention
\ref{convention}, and suppose that $\deg_Z(P) =
(d_1,\ldots,d_{\nu})$ where $\nu = \deg Z - \deg Z'$. If
\begin{equation}\label{minres}
0 \rightarrow \mathbb{F}_{n-1} \rightarrow \cdots
\rightarrow \mathbb{F}_0 \rightarrow I_Z \rightarrow 0
\end{equation} is the minimal graded free resolution of $I_Z$, then
the last syzygy module has the form
\[\mathbb{F}_{n-1} = \mathbb{F}'_{n-1}\oplus R(-d_1-n) \oplus R(-d_2-n)
\oplus \cdots \oplus R(-d_{\nu}-n).\]
\end{theorem}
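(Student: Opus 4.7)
The plan is to build a graded free resolution of $R/I_Z$ from the minimal one of $R/I_{Z'}$ by iterated use of the horseshoe lemma along the tower of saturated ideals
\[I_Z = J_0 \subsetneq J_1 \subsetneq \cdots \subsetneq J_\nu = I_{Z'}, \qquad J_j := (I_Z, F_1, \ldots, F_j),\]
furnished by Lemma \ref{idealepunto}, and then to verify that the freshly-introduced shifts survive minimization.

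By Lemma \ref{idealepunto}(a), for each $j = 1, \ldots, \nu$ multiplication by $F_j$ produces the short exact sequence
\[0 \to (R/I_{P_i})(-d_j) \xrightarrow{\cdot F_j} R/J_{j-1} \to R/J_j \to 0.\]
By Lemma \ref{primarylemma}(b) the minimal free resolution of $R/I_{P_i}(-d_j)$ is the Koszul complex on a minimal generating set of $I_{P_i}$ shifted by $d_j$, whose homological-position-$n$ term is $R(-d_j - n)$. Each $R/J_j$ is arithmetically Cohen--Macaulay of depth $1$ (as $J_j$ is the saturated ideal of a zero-dimensional subscheme of $\pr^n$), so $R/J_j$ has projective dimension exactly $n$ and nothing lives in position $n + 1$.

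Starting with the minimal resolution $E^{(\nu)}_\bullet$ of $R/I_{Z'}$, apply the horseshoe lemma to the above short exact sequences in the order $j = \nu, \nu - 1, \ldots, 1$. Iterating $\nu$ times without intermediate minimization produces a free resolution of $R/I_Z$ whose homological-position-$i$ module is $K_i(-d_1) \oplus \cdots \oplus K_i(-d_\nu) \oplus E^{(\nu)}_i$, where $K_\bullet$ is the Koszul complex on a minimal generating set of $I_{P_i}$. In particular, position $n$ equals
\[R(-d_1 - n) \oplus R(-d_2 - n) \oplus \cdots \oplus R(-d_\nu - n) \oplus E^{(\nu)}_n.\]

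The key remaining step is to check that none of the summands $R(-d_j - n)$ is absorbed during minimization. A cancellation would require a unit entry in a boundary map; no candidate sits at position $n + 1$ since $\operatorname{pd}(R/I_Z) = n$, and the outgoing differential of $R(-d_j - n) \subseteq K_n(-d_j)$ decomposes, by the block-triangular form of the iterated horseshoe, as the Koszul differential into $K_{n-1}(-d_j)$ plus the horseshoe connecting maps $h^{(l)}_n$ into $K_{n-1}(-d_l) = R^n(-d_l - n + 1)$ for each $l < j$, with zero blocks into all remaining summands of position $n - 1$. The Koszul entries are in the maximal ideal; a unit entry in $h^{(l)}_n$ would force $d_j + n = d_l + n - 1$, i.e., $d_l = d_j + 1$, contradicting the ordering $d_1 \leq \cdots \leq d_\nu$ since $l < j$ gives $d_l \leq d_j$. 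Thus every tracked summand persists into the minimal resolution of $R/I_Z$, whose position-$n$ module is the $\mathbb{F}_{n-1}$ appearing in (\ref{minres}); collecting everything else into $\mathbb{F}'_{n-1}$ yields the claimed decomposition. The main obstacle is precisely this minimality bookkeeping across the iterated horseshoe, and it is exactly where the ordering of the $d_j$ becomes indispensable.
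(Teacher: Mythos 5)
Your proof is correct, and it takes a genuinely different route from the paper's. Both proofs hinge on the same ideal-theoretic input (Lemma \ref{idealepunto} supplying the colon computation $(I_Z,F_1,\ldots,F_{j-1}):(F_j)=I_{P_i}$ and hence the same short exact sequences) and on the Koszul resolution of $R/I_{P_i}$, but they traverse the tower $I_Z = J_0 \subsetneq J_1 \subsetneq \cdots \subsetneq J_\nu = I_{Z'}$ in opposite directions. The paper starts from the minimal resolution of $R/I_Z$ and iteratively takes mapping cones to obtain resolutions of each $R/J_r$; since each $J_r$ is saturated of projective dimension $n-1$, the resulting length-$(n+1)$ resolution is too long, the top summand $R(-d_r-n)$ must be split off as a trivial complex, and the degree comparison with $R^{\binom{n}{n-1}}(-d_l-n+1)$ forces this cancellation to occur inside the unknown module $\mathbb{F}_{n-1}$, thereby exhibiting the shift. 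You instead start from the minimal resolution of $R/I_{Z'}$, iterate the horseshoe lemma downward to manufacture a (non-minimal) free resolution of $R/I_Z$, and argue that the fresh summands $R(-d_j-n)$ cannot be cancelled away during minimization, because there is nothing in homological position $n+1$ and all outgoing entries from $R(-d_j-n)$ have positive degree — the Koszul block and the horseshoe connecting blocks land only in $K_{n-1}(-d_l)$ for $l\leq j$, and $d_j+n = d_l+n-1$ would force $d_l=d_j+1>d_j\geq d_l$. Said more intrinsically, $\bar d_n = d_n\otimes k$ vanishes on $R(-d_j-n)\otimes k$, so this summand contributes to $\operatorname{Tor}^R_n(R/I_Z,k)_{d_j+n}$ and therefore appears in the minimal last syzygy module. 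The paper's argument has the feature that it never needs to know anything about the resolution of $R/I_{Z'}$; your argument buys a cleaner and more uniform bookkeeping (one horseshoe block per separator, no intermediate minimization, and a single degree obstruction covering all steps at once). Both arguments use the monotonicity $d_1\leq\cdots\leq d_\nu$ in the same essential way, to keep a terminal Koszul shift from colliding with a penultimate one.
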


\begin{proof}
Let $F_1,\ldots,F_{\nu}$ be a minimal set of separators of $P_i$
of multiplicity $m_i$ and let
$d_r = \deg F_r$ for $r = 1,\ldots,\nu$.
Let $\mathcal{H}_0$ denote the minimal graded free resolution of $I_Z$.
We will proceed
by induction on $r$.

When $r=1$, we have the short exact sequence
\begin{equation}\label{ses1}
0\rightarrow R/((I_{Z}):(F_{1}))(-d_{1})
\xrightarrow{\times F_{1}} R/I_{Z} \rightarrow R/(I_{Z},F_{1})\rightarrow
0.
\end{equation}
By Lemma \ref{idealepunto} we have $R/((I_Z):(F_1)) = R/I_{P_i}$.  Thus,
by Lemma \ref{primarylemma}, the minimal graded free
resolution of $R/((I_{Z}):(F_{1}))(-d_{1})$ has the form
\[ \mathcal{K}_1: 0 \rightarrow R(-d_1-n) \rightarrow R^{\binom{n}{n-1}}(-d_1-n+1) \rightarrow \cdots \rightarrow R(-d_1)
\rightarrow  R/I_{P_i}(-d_{1}) \rightarrow 0.\]

If we now apply the mapping cone construction to ($\ref{ses1}$),
using the resolutions $\mathcal{K}_1$ and $\mathcal{H}_0$,
we construct a graded resolution of $(I_Z,F_1)$:
\[ \mathcal{H}:
0 \rightarrow R(-d_1-n) \rightarrow \mathbb{F}_{n-1}\oplus R^{\binom{n}{n-1}}(-d_1-n+1)
\rightarrow \cdots \rightarrow R \rightarrow R/(I_Z,F_1) \rightarrow 0.\]
The mapping cone construction gives a resolution that,  in
general, is not minimal. Since the ideal $(I_Z,F_1)$ is saturated
by Lemma \ref{idealepunto},
its projective dimension is at most $n-1$, and thus $\mathcal{H}$
is a non-minimal resolution.   So $\mathcal{H} = \mathcal{F}
\oplus \mathcal{G}$ where $\mathcal{F}$ is the minimal resolution
of $R/(I_{Z},F_1)$ and $\mathcal{G}$ is isomorphic to the trivial
complex\footnote[1]{A {\it trivial complex} is the direct sum of complexes
of the form $0 \rightarrow R \stackrel{1}{\rightarrow} R \rightarrow 0
\rightarrow 0 \rightarrow \cdots$} (see \cite[Theorem 20.2]{E}). In particular $R(-d_1-n)$
must be part of the trivial complex $\mathcal{G}$, and thus, to
obtain a minimal resolution, the term $R(-d_1-n)$ must cancel with
something in
\[ \mathbb{F}_{n-1}\oplus R^{\binom{n}{n-1}}(-d_1-n+1).\]

By degree considerations, we cannot cancel with
any of the terms of  $R^{\binom{n}{n-1}}(-d_1-n+1)$.
Thus, $\mathbb{F}_{n-1} = \mathbb{F}'_{n-1} \oplus R(-d_1-n)$,
i.e., the term $R(-d_{1}- n)$ must cancel with something in $\mathbb{F}_{n-1}$.
Note that after we cancel $R(-d_1-n)$, we get a resolution
of $(I_Z,F_1)$, that may or may not be minimal.  We let
\[ \mathcal{H}_1:
0 \rightarrow \mathbb{F}'_{n-1}\oplus R^{\binom{n}{n-1}}(-d_1-n+1)
\rightarrow \cdots \rightarrow R \rightarrow R/(I_Z,F_1) \rightarrow 0.\]
denote this resolution; we shall also require this resolution
at the induction step.

Now suppose that $1 < r \leq \nu$, and assume by
induction that we have shown that $\mathbb{F}_{n-1} = \mathbb{F}'_{n-1} \oplus R(-d_1-n) \oplus \cdots
\oplus R(-d_{r-1}-n)$, and that a resolution of $(I_Z,F_1,\ldots,F_{r-1})$
is given by
\[ \mathcal{H}_{r-1}:
0 \rightarrow
\mathbb{F}'_{n-1} \oplus R^{\binom{n}{n-1}}(-d_1-n+1) \oplus \cdots \oplus R^{\binom{n}{n-1}}(-d_{r-1}-n+1)
\rightarrow \]
\[
\cdots \rightarrow R \rightarrow R/(I_Z,F_1,\ldots,F_{r-1}) \rightarrow 0.\]

We have a short exact sequence
\begin{equation}\label{ses2}
0\rightarrow R/((I_{Z},F_1,\ldots,F_{r-1}):(F_{r}))(-d_{r})
\xrightarrow{\times F_{r}} R/(I_{Z},F_1,\ldots,F_{r-1})
\end{equation}
\[\rightarrow R/(I_{Z},F_{1},\ldots,F_r)\rightarrow
0.\]
By Lemma \ref{idealepunto}, $R/((I_{Z},F_1,\ldots,F_{r-1}):(F_{r}))(-d_{r}) \cong
R/I_{P_i}(-d_r)$, so its resolution is given by
\[ \mathcal{K}_r: 0 \rightarrow R(-d_r-n) \rightarrow R^{\binom{n}{n-1}}(-d_r-n+1) \rightarrow \cdots \rightarrow R(-d_r)
\rightarrow  R/I_{P_i}(-d_{r}) \rightarrow 0.\]

Using the resolutions $\mathcal{K}_r$ and $\mathcal{H}_{r-1}$,
the short exact sequence ($\ref{ses2}$), and the mapping cone construction,
we have a
resolution of $R/(I_Z,F_1,\ldots,F_r)$ of the following form:
\[
0 \rightarrow R(-d_r-n) \rightarrow \mathbb{F}'_{n-1}
 \oplus R^{\binom{n}{n-1}}(-d_1-n+1) \oplus \cdots \oplus R^{\binom{n}{n-1}}
(d_{r}-n+1)
\rightarrow \]
\[
\cdots \rightarrow R \rightarrow R/(I_Z,F_1,\ldots,F_{r}) \rightarrow 0.\]

By Lemma \ref{idealepunto}, the ideal $(I_Z,F_1,\ldots,F_{r-1})$ is saturated,
so the ideal can have projective dimension at most $n-1$.  In other words,
the above resolution, which has length $n$, is too long.  As argued
above, $R(-d_r-n)$ must be part of the trivial complex and cancel
with some term in
\[ \mathbb{F}'_{n-1}
 \oplus R^{\binom{n}{n-1}}(-d_1-n+1) \oplus \cdots \oplus R^{\binom{n}{n-1}}
(-d_{r}-n+1).\]
Recall that the
definition of $\deg_Z(P_i) = (d_1,\ldots,d_{\nu})$ implies that
$d_1 \leq \cdots \leq d_r \leq \cdots \leq d_{\nu}$.
So $d_r + n > d_j + n -1$ for all $j=1,\ldots,r$.  Thus,
$R(-d_r-n)$ must cancel with some term in $\mathbb{F}'_{n-1}$,
i.e., $\mathbb{F}'_{n-1} = \mathbb{F}''_{n-1}\oplus R(-d_r-n)$.
Thus, $\mathbb{F}_n =  \mathbb{F}''_{n-1} \oplus R(-d_1-n) \oplus \cdots
\oplus R(-d_r-n)$,
and
\[ \mathcal{H}_{r}:
0 \rightarrow
\mathbb{F}''_{n-1} \oplus R^{\binom{n}{n-1}}(-d_1-n+1) \oplus \cdots \oplus R^{\binom{n}{n-1}}(-d_{r}-n+1)
\rightarrow \]
\[
\cdots \rightarrow R \rightarrow R/(I_Z,F_1,\ldots,F_{r}) \rightarrow 0\]
is a resolution of $R/(I_Z,F_1,\ldots,F_r)$.
This now completes the induction step.
\end{proof}

\begin{remark}
When all the $m_i$'s are one, that is, $Z$ is a set of reduced
points, our result recovers the results of Abrescia, Bazzotti,
and Marino \cite{ABM,Ba1,Ma3}.
\end{remark}

\begin{definition}\label{SB3}
Let $Z$ be a scheme of fat points  with  minimal graded free
resolution of type (\ref{minres}) with
$\mathbb{F}_{n-1}=\bigoplus_{j\in B_{n-1}}
R(-j)^{\beta_{(n-1),j}}$.
If $B_{n-1}= \{j_1,\ldots,j_t\}$, then
associate to $\mathbb{F}_{n-1}$ the vector
\[\mathcal{B}_{n-1} = (\underbrace{j_1,\ldots,j_1}_{\beta_{n-1,j_1}},\ldots,\underbrace{j_t,\ldots,j_t}_{\beta_{n-1,j_t}}).\]
For each integer $\tau \geq 1$, let
\[S_{\mathcal{B}_Z,\tau} = \{(j_{i_1}-n,\ldots,j_{i_\tau}-n) ~|~  j_{i_1} \leq \cdots \leq j_{i_\tau} ~\text{and}~
j_{i_1},\ldots,j_{i_{\tau}} \in \mathcal{B}_{n-1}\},\]
that is, the set of $\tau$-tuples whose entries are non-decreasing
and appear among the shifts of $\mathbb{F}_{n-1}$.
We call $S_{\mathcal{B}_Z,\tau}$ the {\it socle-vectors} of length
$\tau$ associated to the Betti numbers of $Z$.
\end{definition}

An example of the set of socle-vectors can be found in the
example following the next theorem.  Our next theorem shows that
the set of socle-vectors is a subset of the set of permissible
vectors.

\begin{theorem} Let $Z,Z'$ be as in Convention \ref{convention},
let $\nu = \deg Z - \deg Z'$
and let $H_Z$ be the Hilbert function of $Z$.
Then
\[\deg_Z(P_i) \in S_{\mathcal{B}_Z,\nu} \subseteq S_{H_Z,\nu}.\]
\end{theorem}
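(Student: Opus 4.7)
The claim has two inclusions. The first, $\deg_Z(P_i) \in S_{\mathcal{B}_Z,\nu}$, is a direct reading of Theorem \ref{teofat}: that theorem writes $\mathbb{F}_{n-1} = \mathbb{F}'_{n-1} \oplus R(-d_1-n) \oplus \cdots \oplus R(-d_\nu-n)$, so each value $d_j+n$ appears among the shifts of $\mathbb{F}_{n-1}$ with multiplicity at least its multiplicity in the tuple. Since $\deg_Z(P_i)=(d_1,\ldots,d_\nu)$ is by convention ordered non-decreasingly, it lies in $S_{\mathcal{B}_Z,\nu}$ by Definition \ref{SB3}.

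For the substantive inclusion $S_{\mathcal{B}_Z,\tau}\subseteq S_{H_Z,\tau}$, the plan is to pass to an artinian reduction and match the last-syzygy shifts with the socle degrees. Choose a linear form $L$ not vanishing at any point of $\supp(Z)$; then $L$ is a nonzero divisor on $R/I_Z$, so tensoring the resolution (\ref{minres}) with $R':=R/(L)\cong k[x_1,\ldots,x_n]$ remains exact and minimal, yielding the minimal graded $R'$-resolution of the artinian algebra $A:=R/(I_Z,L)$ with the same graded Betti numbers. The Hilbert function of $A$ is $\Delta H_Z$, and the standard identification of the socle with the last Tor module for an artinian quotient of a polynomial ring gives
\[ \dim_k \operatorname{soc}(A)_d \;=\; \beta^{R'}_{n,d+n}(A) \;=\; \beta^{R}_{n-1,d+n}(I_Z), \]
which equals the multiplicity of $d+n$ among the entries of $\mathcal{B}_{n-1}$.

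Now fix any $(d_1,\ldots,d_\tau)=(j_{i_1}-n,\ldots,j_{i_\tau}-n)\in S_{\mathcal{B}_Z,\tau}$. The multiplicity constraint built into Definition \ref{SB3} --- that the entries $j_{i_l}$ form a multi-subset of $\mathcal{B}_{n-1}$ --- translates to: the number of $l$ with $d_l=t$ is at most $\dim_k \operatorname{soc}(A)_t$ for every $t$. Hence I may pick linearly independent socle elements $s_1,\ldots,s_\tau\in\operatorname{soc}(A)$ with $\deg s_l=d_l$. The quotient $\overline{A}:=A/(s_1,\ldots,s_\tau)$ is still an artinian graded quotient of $R'$ (the ideal generated by socle elements is just their $k$-linear span), with Hilbert function
\[ H_{\overline{A}}(t) \;=\; \Delta H_Z(t)-|\{l : d_l=t\}| \;=\; \Delta H_{\underline{d}}(t), \]
where $H_{\underline{d}}(t):=H_Z(t)-|\{l : d_l\leq t\}|$. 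Thus $\Delta H_{\underline{d}}$ is realized as the Hilbert function of an artinian quotient of $k[x_1,\ldots,x_n]$, hence an $O$-sequence; by the equivalent formulation in Definition \ref{sh}, $H_{\underline{d}}$ is a zero-dimensional differentiable $O$-sequence, and $(d_1,\ldots,d_\tau)\in S_{H_Z,\tau}$.

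The main obstacle is the clean identification of the last-syzygy shifts of $I_Z$ with the socle degrees of the artinian reduction $A$; this combines the preservation of minimal graded Betti numbers under reduction by a nonzero divisor with the standard Tor/socle formula for artinian quotients of polynomial rings, both standard but needing care with graded shifts. Once this link is set up, choosing the appropriate socle elements delivers the Hilbert function identity, and the rest is just unpacking Definition \ref{sh}.
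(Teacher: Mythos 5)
Your proof is correct and essentially follows the paper's argument: the first inclusion is just a reading of Theorem \ref{teofat}, and for the second you pass to the artinian reduction, identify socle degrees with last-syzygy shifts, choose independent socle elements $s_1,\ldots,s_\tau$ of the prescribed degrees, and compute the Hilbert function of $A/(s_1,\ldots,s_\tau)$. The one place your write-up is cleaner than the paper's is the observation that an ideal generated by socle elements equals their $k$-linear span (so $\dim_k(s_1,\ldots,s_\tau)_t$ is just the number of $s_l$ of degree $t$); the paper establishes this same fact more laboriously by partitioning $\{G_1,\ldots,G_\nu\}$ into those of degree $<t$, $=t$, and $>t$ and arguing degree by degree. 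Substantively, the two arguments are the same.
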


\begin{proof}
By Theorem \ref{teofat}, $\deg_Z(P_i) \in S_{\mathcal{B}_Z,\nu}$.

For each $\nu$-tuple $\underline{d} \in S_{\mathcal{B}_Z,\nu}$, we want
to show that $\underline{d} \in S_{H_Z,\nu}$.  Note that to show
that $\underline{d}$ is a permissible vector of length $\nu$
of $H_Z$, it suffices to show that the sequence
\[\{\Delta H_Z(t) - |\{d_i \in (d_1,\ldots,d_\nu) ~|~ d_i = t|\}\]
is the Hilbert function of an artinian quotient of $k[x_1,\ldots,x_n]$.
Then, by ``integrating'' this sequence, we obtain the sequence
\[\{H_Z(t) -  |\{d_i \in (d_1,\ldots,d_\nu) ~|~ d_i \leq t|\},\]
which will be the Hilbert function of a zero-dimensional scheme.

Let $S = R/I_Z$ be the coordinate ring of $Z$.  Since $S$ is a Cohen-Macaulay
ring of dimension one, we can pass to an artinian reduction $S'$ of
$S$.  That is, there exists a nonzero divisor $L$ of degree one such that
$S' \cong R/(I_Z,L)$ is an artinian ring.  Furthermore, since
$R/(I_Z,L) \cong (R/(L))/((I_Z,L)/(L))$, we can assume that
$S'$ is an artinian quotient of $k[x_1,\ldots,x_n]$.  So
$S' = k[x_1,\ldots,x_n]/J$ for some ideal $J$.

Because $S'$ is artinian, we can rewrite $S'$ as
\[S' = k \oplus S'_1 \oplus \cdots \oplus S'_t ~~\mbox{with $S'_t \neq 0$}\]
where $S'_i$ is the set of homogeneous elements of $S'$ of degree $i$.
The maximal ideal of $S'$ is then $m = \bigoplus_{i=1}^t S'_i$.
The socle of $S'$, denoted $\operatorname{soc}(S')$, is the annihilator of $m$.  In particular,
$\operatorname{soc}(S')$ is a homogeneous
ideal which we can write as the direct sum of its graded pieces:
$\operatorname{soc}(S') = T_1 \oplus \cdots \oplus T_t$
where $T_t = S'_t$.  The dimension of each $T_i$ is then
related to the graded Betti numbers of $J$.  In particular,
\[\dim_k T_i = \beta^{R'}_{n-1,n+i}(J) ~~\mbox{where $R' = k[x_1,\ldots,x_n]$.}\]
For more information about the socle and for proofs of these facts,
see \cite{GHMS}.

But because we are passing to an artinian reduction, the graded Betti numbers
of $I_Z$ and $J$ are the same, that is,
\[\beta^R_{n-1,n+i}(I_Z) = \beta^{R'}_{n-1,n+i}(J) ~~\mbox{for all $i \in \N$}.\]
Thus, if $\underline{d} = (d_1,\ldots,d_{\nu}) \in S_{\mathcal{B}_Z,\nu}$,
we can pick an element $G_i \in \operatorname{soc}(S')$ such that $\deg G_i = d_i$.  Moreover,
if $d_i = d_{i+1} = \cdots = d_{i+b}$, we can pick elements $G_i, \ldots,G_{i+b}$ that are linearly
independent socle elements since
$b \leq \beta^R_{n-1,n+d_i}(I_Z) = \beta^{R'}_{n-1,n+d_i}(J) = \dim_k T_{d_i}$.  That is, we take
$G_i,\ldots,G_{i+b}$ to be basis elements of $T_{d_i}$.

Thus, to $\underline{d} = (d_1,\ldots,d_{\nu})$ we can associate $\nu$ socle elements $\{G_1,\ldots,G_{\nu}\}$ of $S'$
such that $\deg G_i = d_i$, and if any subset of elements has the same degree,
then these elements are linearly independent over $k$.

We now want to compute the Hilbert function of $S'/(G_1,\ldots,G_{\nu})$.
We claim that for all $t \in \N$,
\[\dim_k(G_1,\ldots,G_{\nu})_t = |\{G_i \in \{G_1,\ldots,G_{\nu}\} ~|~ \deg G_i = t\}|.\]
We partition the elements of $\{G_1,\ldots,G_{\nu}\}$ into three sets, some
of which may be empty:
\begin{eqnarray*}
\mathcal{G}_< &= &\{G_1,\ldots,G_a\} = \{G_i \in \{G_1,\ldots,G_{\nu}\} ~|~ \deg G_i < t\}\\
\mathcal{G}_t &= &\{G_{a+1},\ldots,G_b\} = \{G_i \in \{G_1,\ldots,G_{\nu}\} ~|~ \deg G_i = t\}\\
\mathcal{G}_> &= &\{G_{b+1},\ldots,G_{\nu}\} = \{G_i \in \{G_1,\ldots,G_{\nu}\} ~|~ \deg G_i > t\}.
\end{eqnarray*}
By our choice of the $G_i$'s, the elements of $\mathcal{G}_t$ are linearly
independent, so $\dim_k(G_1,\ldots,G_{\nu})_t \geq |\mathcal{G}_t|$.
Now let $F$ be any element of $(G_1,\ldots,G_{\nu})_t$.  By degree
considerations, the elements of $\mathcal{G}_>$ do not contribute
to $(G_1,\ldots,G_{\nu})_t$.  So,
\[F = G_1A_1 + \cdots + G_aA_a + c_{a+1}G_{a+1} + \cdots +c_bG_b\]
where $G_1,\ldots,G_{a} \in \mathcal{G}_<$, $G_{a+1},\ldots,G_b \in \mathcal{G}_t$, $A_1,\ldots,A_a \in S'$, and
$c_{a+1},\ldots,c_b \in k$.  But since $\deg F = t$, $\deg A_i = t - \deg G_i > 0$
for $i=1,\ldots,a$.  This means that each $A_i$ is in $m$, which implies that
$G_iA_i = 0$ since each $G_i$ is a socle element.
Hence
\[F = c_{a+1}G_{a+1} + \cdots + c_bG_b\]
So, $F$ is in the vector space spanned by $\{G_{a+1},\ldots,G_b\}$, whence
$\dim_k (G_1,\ldots,G_{\nu})_t \leq |\mathcal{G}_t|$.
We have thus shown that for all $t \in \N$
\begin{eqnarray*}
H_{S'/(G_1,\ldots,G_{\nu})}(t) & = & H_{S'}(t) - |\{G_i \in \{G_1,\ldots,G_{\nu}\} ~|~ \deg G_i = t\}| \\
& = & H_{S'}(t) - |\{d_i \in (d_1,\ldots,d_{\nu}) ~|~ d_i = t\}|.
\end{eqnarray*}
Because $H_{S'}(t) = \Delta H_Z(t)$ for all $t$, this now completes the proof
since $H_{S'/(G_1,\ldots,G_{\nu})}$ is the Hilbert function of an artinian quotient
of $k[x_1,\ldots,x_n]$.
\end{proof}

\begin{example}
In $\pr^2$ let us consider two totally reducible forms $F$ and $G$
such that $\deg F = 3$ and $\deg G=7$, i.e., $F=L_{1}L_2L_{3}$
and $G=L'_{1}\cdots L'_{7}$ where the $L_i$s and $L'_i$s are linear
forms. Let $X=CI(3,7)$ be the complete
intersection of type $(3,7)$ defined by
$I_X = (F,G)$.  The 21 points of $X$
are the 21 points of intersection of the
$L_i$s and $L'_i$s, i.e., $P_{ij}=L_{i}\cap L'_{j}$ for
$i=1,2,3$ and $j=1,\ldots,7$. Set $Y=CI(3,7)\setminus \{P_{37}\}$ and
let $Z$ be the scheme of double points whose support is
the 20 points of $Y$, i.e.,
\[Z = 2P_{11} + \cdots + 2P_{36}.\]
We will now find the minimal separating set $\sf DEG_Z(2P_{36})$.
We let $Z_2 = Z$,  and
\[Z_1 = 2P_1 + \cdots + 2P_{35} + P_{36} ~\text{and}~ Z_0 = 2P_1 + \cdots + 2P_{35}.\]

By results found in \cite{GV1,GV2},
the minimal graded free resolution of $I_{Z_2}$ is:
\[0 \rightarrow R^2(-12)\oplus R(-15)\oplus R(-16) \rightarrow R(-6)\oplus R(-10)
\oplus R(-11)\oplus R^2(-14) \rightarrow I_{Z_2}\rightarrow 0.\]
Furthermore, the Hilbert function of $R/I_{Z_2}$ is
{\f \begin{center}
\begin{tabular}{ccccccccccccccccccccc}
{\f $t$} & : & \f{0} & \f{1} & \f{2} & \f{3} & \f{4} & \f{5}&
\f{6}&\f{7} & \f{8} & \f{9} & \f{10} & \f{11} & \f{12} & \f{13} &
 \f{14} & \f{15}&\f{16}\\
$H_{Z_2}(t)$ & :  &1 &3 &6 &10 &15 &21&27&33&39  &45 &50
&53&56&59&60&$\rightarrow$
&$\rightarrow$\\
$\Delta H_{Z_2}(t)$& :  &1 &2 &3 &4 &5 &6&6 &6&6&6 &5 &3
&3&3&1&0&$\rightarrow$\\
\end{tabular}
\end{center}}
\noindent
By Theorem \ref{teofat},
the degree of the minimal separators of $P_{36}$ of multiplicity 2
must be one of the elements of $S_{{B_Z},2}$.  From the resolution
of $I_Z$, we compute the vector
\[\mathcal{B}_{2-1} = (12,12,15,16).\]
Then the set of socle vectors of length 2 is
\[
S_{B_Z,2}= \{(10,10),(10,13),(10,14),(13,14)\},\]
i.e., $\deg_Z(P_{36})$ is one of these four tuples.
We use CoCoA \cite{C} to compute the minimal graded free resolution
of $I_{Z_1}$:
\[ 0 \rightarrow R(-11)\oplus R(-12)\oplus R(-14)\oplus R(-16)
\rightarrow R(-6)\oplus R^2(-10)\oplus R(-13)\oplus R(-14)
\rightarrow I_{Z_1}\rightarrow 0\]
\noindent
and its first difference Hilbert function is
\[\begin{tabular}{cccccccccccccccccccc}
{\f $t$} & : & \f{0} & \f{1} & \f{2} & \f{3} & \f{4} & \f{5}&
\f{6} &\f{7} & \f{8} & \f{9} & \f{10} & \f{11} & \f{12} & \f{13} &
 \f{14} & \f{15}&\f{16}\\
$\Delta H_{Z_1}(t)$& :  &1 &2 &3 &4 &5 &6&6 &6&6&6 &4 &3
&3&2&1&0&$\rightarrow$\\
\end{tabular}\]
\noindent
By comparing $\Delta H_{Z_1}$ and $\Delta H_{Z_2}$,
Theorem \ref{hilbertfunction} reveals that
$\deg_{Z_2}(P_{36}) = (10,13)$.  Furthermore,
by Theorem \ref{teofat}, $\deg_{Z_1}(P_{36})$ must be
one of $\{(11-2),(12-2),(14-2),(16-2)\} = \{(9),(10),(12),(14)\}$.

If we compute the Hilbert
function of  $R/I_{Z_{0}}$
we get
\[\begin{tabular}{cccccccccccccccccccc}
{\f $t$} & : & \f{0} & \f{1} & \f{2} & \f{3} & \f{4} & \f{5}&
\f{6} & \f{7} & \f{8} & \f{9} & \f{10} & \f{11} & \f{12} & \f{13} &
 \f{14} & \f{15}&\f{16}\\
$\Delta H_{Z_0}(t)$& :  &1 &2 &3 &4 &5 &6&6& 6 & 6&6 &4 &3
&2&2&1&0 &$\rightarrow$\\
\end{tabular}\]
which reveals that $\deg_{Z_1}(P_{36})=(12)$.

Thus, the minimal separating set of the fat point $2P_{36}$ is
the set ${\sf
{DEG}}_Z(2P_{36})=\{(12),(10,13)\}$.
\end{example}

As an interesting corollary to Theorem \ref{teofat}, we get a bound on the rank of the last syzygy
module in terms of the $m_i$s and $n$.

\begin{corollary}\label{rank}
Let $Z = m_1P_1 + \cdots + m_sP_s \subseteq \pr^n$ be
a set of fat points, and let $m = \max\{m_1,\ldots,m_s\}$.
If
\[0 \rightarrow \mathbb{F}_{n-1} \rightarrow \cdots \rightarrow
\mathbb{F}_0 \rightarrow I_Z \rightarrow 0 \] is a minimal graded
free resolution of $I_Z$, then
\[\operatorname{rk}\mathbb{F}_{n-1} \geq \binom{m+n-2}{n-1}.\]
\end{corollary}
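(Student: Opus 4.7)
The proof should be a near-immediate consequence of Theorem \ref{teofat} together with the degree formula in the Remark following Theorem \ref{theorem1}. The plan is as follows.

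First, I would select an index $i \in \{1,\ldots,s\}$ realizing the maximum multiplicity, so that $m_i = m$, and form the associated scheme $Z' = m_1P_1 + \cdots + (m-1)P_i + \cdots + m_sP_s$ as in Convention \ref{convention}. (Note this makes sense even if $m=1$, in which case $P_i$ simply drops out of the support of $Z'$.) Set $\nu = \deg Z - \deg Z'$ and write $\deg_Z(P_i) = (d_1,\ldots,d_\nu)$.

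Next, by the degree formula computed in the Remark after Theorem \ref{theorem1}, we have
\[
\nu \;=\; \binom{m_i + n - 2}{n-1} \;=\; \binom{m+n-2}{n-1}.
\]
Now apply Theorem \ref{teofat} to $Z$ and $Z'$: the last syzygy module in the minimal graded free resolution of $I_Z$ admits a direct sum decomposition
\[
\mathbb{F}_{n-1} \;=\; \mathbb{F}'_{n-1} \oplus R(-d_1 - n) \oplus R(-d_2 - n) \oplus \cdots \oplus R(-d_\nu - n).
\]
Taking ranks and discarding the (possibly zero) summand $\mathbb{F}'_{n-1}$ gives
\[
\operatorname{rk} \mathbb{F}_{n-1} \;\geq\; \nu \;=\; \binom{m+n-2}{n-1},
\]
which is the desired bound.

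There is no real obstacle here: the work has been done in Theorem \ref{teofat} (which supplies the direct summands of $\mathbb{F}_{n-1}$) and in the combinatorial identity for $\nu$. The only thing one needs to be slightly careful about is confirming that Theorem \ref{teofat} applies uniformly regardless of whether $m = 1$ or $m \geq 2$, but Convention \ref{convention} is designed precisely to cover both cases.
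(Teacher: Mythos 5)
Your proof is correct and follows the same route as the paper: pick a point of maximal multiplicity, apply Theorem~\ref{teofat} to exhibit $\nu = \binom{m+n-2}{n-1}$ free direct summands of $\mathbb{F}_{n-1}$, and take ranks. The paper's version is simply more terse; your added care about the case $m=1$ is harmless but not needed since the argument is uniform.
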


\begin{proof}
Suppose $P_i$ has multiplicity $m$.  Then by Theorem \ref{teofat}, the
syzygy module $\mathbb{F}_{n-1}$ must have at least $\nu = \deg Z - \deg Z'
= \binom{m+n-2}{n-1}$ shifts.  The conclusion now follows.
\end{proof}


\section{Application: a Cayley-Bacharach type of result}

We use Theorem \ref{teofat} to produce a
Cayley-Bacharach (to be defined below) type of result for homogeneous
sets of fat points
in $\pr^n$ whose support is a complete intersection (see
\cite{GV1,GV2} and references there within, for more on these special configurations).
In particular, we show that if $Z$
is a homogeneous fat point scheme whose support
is a complete intersection, then $\deg_Z(P)$ is the
same for every point $P \in \operatorname{Supp}(Z)$.  We prove
this result by showing that the last syzygy module of $I_Z$
only permits one possible choice for $\deg_Z(P)$.
We also show how to calculate $\deg_Z(P)$ in this situation.

Let $X \subseteq \pr^n$ be a complete intersection of points of
type $(\delta_1,\ldots,\delta_n)$.  This means that $I_X =
(F_1,\ldots,F_n)$ where $F_1,\ldots,F_n$ define a complete
intersection with $\deg F_i = \delta_i$ for all $i=1,\ldots,n$.
Without loss of generality, we can assume that $\delta_1 \leq
\cdots \leq \delta_n$. We now recall a result which is a special
case of a classical result of Zariski-Samuel \cite[Lemma 5,
Appendix 6]{ZS}.

\begin{lemma}
Suppose that $X = \{P_1,\ldots,P_s\} \subseteq \pr^n$ is a complete
intersection of reduced points.  For any integer $m > 1$, the defining ideal of the
homogeneous fat point scheme $Z = mP_1 + \cdots + mP_s$
is given by
$I_Z = I_X^m.$
\end{lemma}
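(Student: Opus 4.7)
The plan is to identify $I_Z$ with the $m$-th symbolic power $I_X^{(m)}$ of $I_X$, and then to invoke the classical Zariski-Samuel theorem (the same one used in the proof of Lemma \ref{primarylemma}(a)) to conclude $I_X^{(m)} = I_X^m$. Concretely, by the definition of a fat point scheme, $I_Z = \bigcap_{i=1}^s I_{P_i}^m$. By Lemma \ref{primarylemma}(a), each ideal $I_{P_i}^m$ is $I_{P_i}$-primary, and since the points $P_1,\ldots,P_s$ are distinct, the radicals $I_{P_1},\ldots,I_{P_s}$ are pairwise distinct maximal homogeneous primes. Hence $\bigcap_{i=1}^s I_{P_i}^m$ is an irredundant primary decomposition, and its associated primes are exactly the minimal primes of $I_X = I_{P_1}\cap\cdots\cap I_{P_s}$. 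By the definition of the symbolic power, this forces $I_Z = I_X^{(m)}$.

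Once this identification is in place, I would apply the complete intersection hypothesis. By assumption $I_X = (F_1,\ldots,F_n)$ is generated by a regular sequence of forms, so the Zariski-Samuel result \cite[Lemma 5, Appendix 6]{ZS} gives $I_X^m = I_X^{(m)}$. Combining the two equalities yields $I_Z = I_X^m$, as desired.

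The easy inclusion $I_X^m \subseteq I_Z$ is worth noting as a sanity check but plays no independent role in the argument: since $I_X \subseteq I_{P_i}$ for each $i$, one has $I_X^m \subseteq I_{P_i}^m$, and hence $I_X^m \subseteq \bigcap_i I_{P_i}^m = I_Z$. The real content is the reverse inclusion $I_Z \subseteq I_X^m$, which is precisely the statement that ordinary and symbolic powers of a complete intersection ideal coincide. This is the only nontrivial step, and it is the single point at which the complete intersection hypothesis on $X$ gets used; it is handled entirely by the cited classical reference, so the rest of the argument is just setting up the correct language (primary decomposition of $I_Z$) in which to apply it. Thus no serious obstacle is expected, and the proof should be short.
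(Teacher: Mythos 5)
Your argument is correct and is exactly the one the paper intends: the paper gives no proof of this lemma at all, simply asserting that it is a special case of Zariski--Samuel \cite[Lemma 5, Appendix 6]{ZS}, and your expansion---identify $I_Z$ with the symbolic power $I_X^{(m)}$ via the primary decomposition $I_Z=\bigcap_i I_{P_i}^m$ (using Lemma \ref{primarylemma}(a)) and then invoke the complete-intersection hypothesis on $I_X$ to equate $I_X^{(m)}$ with $I_X^m$---is precisely the standard way to unpack that citation. One small remark: the step ``this forces $I_Z=I_X^{(m)}$'' is slightly terse, since having a primary decomposition with the minimal primes of $I_X$ as associated primes does not by itself pin down the symbolic power; one should also note that $I_X R_{I_{P_i}}=I_{P_i}R_{I_{P_i}}$ for each $i$, so that the $I_{P_i}$-primary component of $I_X^m$ is exactly $I_{P_i}^m$, but this is immediate from localization and does not alter your conclusion.
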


The ideal of $Z$ is then a power of a complete intersection. In
\cite{GV1}, the first and third authors described the graded Betti
numbers in the graded minimal free resolution of the power of
any complete intersection in terms of the type.  As a special case,
we can describe all the shifts at the end of the
resolution of $I_Z$.

\begin{theorem}     \label{maintheorem}
Suppose that $X = \{P_1,\ldots,P_s\} \subseteq \pr^n$ is a complete
intersection of reduced points of type $(\delta_1,\ldots,\delta_n)$.
For any integer $m > 1$,  the minimal graded free resolution of the
ideal $I_Z$ defining the
homogeneous fat point scheme $Z = mP_1 + \cdots + mP_s$
has the form
\[0 \rightarrow \mathbb{F}_{n-1} \rightarrow \cdots \rightarrow \mathbb{F}_0 \rightarrow
I_Z = I_X^m \rightarrow 0\]
where
\[\mathbb{F}_{n-1}  =  \bigoplus_{(a_1,\ldots,a_n) \in \M_{n,m+n-1}}
R(-a_1\dd_1-\cdots-a_n \dd_n).\]
Here, the set
\[\mathcal{M}_{n,m+n-1} := \left\{ (a_1,\ldots,a_n)\in \N^n
\left|\begin{array}{c}
a_1 + \cdots + a_n = m+n-1 ~~\mbox{and }\\
\mbox{$a_i \geq 1$ for all $i$}
\end{array} \right\}\right..\]
\end{theorem}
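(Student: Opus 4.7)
The plan is to deduce the statement as a specialization of the main theorem of \cite{GV1}, which gives an explicit description of every graded Betti number in the minimal free resolution of $I_X^m$ when $X$ is a complete intersection. First, I would verify that the resolution really does have length $n-1$, so that it makes sense to talk about $\mathbb{F}_{n-1}$ as the last module. Since $I_X$ is a codimension $n$ complete intersection and $I_X^m$ is $I_X$-primary of the same codimension, the ring $R/I_Z = R/I_X^m$ is Cohen--Macaulay of Krull dimension $1$. By the Auslander--Buchsbaum formula the projective dimension of $I_Z$ is exactly $n-1$, so the minimal resolution is of the claimed shape with $\mathbb{F}_{n-1}$ at the end.

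Next I would read off from the description in \cite{GV1} the data for the end of the resolution. The main theorem of that paper labels a basis of each free module $\mathbb{F}_p$ in the resolution of $I_X^m$ by certain $n$-tuples of non-negative integers, and gives the shift on each basis element as a non-negative integer combination of the $\delta_i$. Specializing to $p = n-1$, the indexing set turns out to be precisely $\mathcal{M}_{n,\,m+n-1}$, i.e.\ the $n$-tuples summing to $m+n-1$ with every entry $\geq 1$, and the shift attached to the tuple $(a_1,\ldots,a_n)$ is $a_1\delta_1 + \cdots + a_n\delta_n$. This is exactly the formula in the statement, so once the specialization is carried out the theorem follows.

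As a sanity check I would compute $|\mathcal{M}_{n,\,m+n-1}|$: substituting $b_i = a_i - 1$ turns it into the number of non-negative solutions to $b_1 + \cdots + b_n = m-1$, which is $\binom{m+n-2}{n-1}$. This matches the lower bound $\operatorname{rk}(\mathbb{F}_{n-1}) \geq \binom{m+n-2}{n-1}$ from Corollary \ref{rank}, and the bound is in fact attained with equality in this complete intersection case. It also matches the small example $n=1$ (where $\mathbb{F}_0 = R(-m\delta_1)$) and the case $m=1$ (where the single tuple $(1,\ldots,1)$ recovers the Koszul end term $R(-\delta_1-\cdots-\delta_n)$).

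The main obstacle I anticipate is the bookkeeping involved in translating the indexing conventions of \cite{GV1} to the compact form $\mathcal{M}_{n,\,m+n-1}$ used here, and in confirming that the claimed shift formula $a_1\delta_1+\cdots+a_n\delta_n$ correctly captures \emph{all} the direct summands of the last syzygy module (with no hidden cancellations between consecutive homological degrees at the tail of the resolution). This last point is where Cohen--Macaulayness plays the critical role, since it prevents any shortening of the resolution and guarantees that every generator of $\mathbb{F}_{n-1}$ prescribed by \cite{GV1} truly survives in the minimal free resolution of $I_Z$.
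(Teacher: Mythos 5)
Your proposal matches the paper's proof: the paper proves Theorem \ref{maintheorem} simply by citing Theorem 2.1 of \cite{GV1}, which describes the full graded minimal free resolution of $I_X^m$ for a complete intersection $X$, and reading off the last term. Your additional remarks — Cohen--Macaulayness forcing projective dimension exactly $n-1$, and the count $|\mathcal{M}_{n,m+n-1}| = \binom{m+n-2}{n-1}$ — are correct sanity checks but are not needed beyond the citation.
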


\begin{proof}
See Theorem 2.1. in \cite{GV1}.
\end{proof}

\begin{corollary}\label{rkcor}
With the hypotheses as in Theorem \ref{maintheorem},
\[\operatorname{rk} \mathbb{F}_{n-1}  = \binom{m+n-2}{n-1}.\]
\end{corollary}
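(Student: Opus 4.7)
The plan is to read off the rank directly from the description of $\mathbb{F}_{n-1}$ given in Theorem \ref{maintheorem} and then perform a routine counting argument.

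By Theorem \ref{maintheorem}, we have
\[
\mathbb{F}_{n-1} = \bigoplus_{(a_1,\ldots,a_n) \in \mathcal{M}_{n,m+n-1}} R(-a_1\delta_1 - \cdots - a_n\delta_n),
\]
so $\operatorname{rk}\mathbb{F}_{n-1} = |\mathcal{M}_{n,m+n-1}|$. Thus the entire task reduces to counting the set of $n$-tuples $(a_1,\ldots,a_n) \in \N^n$ of positive integers summing to $m+n-1$.

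The standard stars-and-bars substitution $b_i := a_i - 1 \geq 0$ transforms the condition into counting the non-negative integer solutions of $b_1 + \cdots + b_n = m-1$, and there are exactly $\binom{m-1+n-1}{n-1} = \binom{m+n-2}{n-1}$ such solutions. This gives the claimed equality.

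There is no real obstacle here since Theorem \ref{maintheorem} has already done the hard work of identifying the shifts of the last syzygy module, and the remaining argument is purely combinatorial. The only thing worth noting is that this is consistent with the lower bound $\operatorname{rk}\mathbb{F}_{n-1} \geq \binom{m+n-2}{n-1}$ from Corollary \ref{rank}, so in the complete intersection case the bound of that corollary is sharp.
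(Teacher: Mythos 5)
Your proof is correct and follows the paper's argument exactly: read off $\operatorname{rk}\mathbb{F}_{n-1}=|\mathcal{M}_{n,m+n-1}|$ from Theorem \ref{maintheorem} and count the compositions of $m+n-1$ into $n$ positive parts. You simply make the stars-and-bars step explicit, which the paper leaves as a remark.
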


\begin{proof}
By Theorem \ref{maintheorem}, the set of
integer solutions to
$a_1 + \cdots + a_n = m+n-1$ with all $a_i \geq 1$
is in bijection with the generators of the free module
$\mathbb{F}_{n-1}$.  The number of integer solutions
to this equation is $\binom{m+n-2}{n-1}$.
\end{proof}

Every fat point in a homogeneous fat point scheme
whose support is a complete intersection must now
have the same degree:

\begin{theorem}\label{degCI}
Let $Z = mP_1 + \cdots + mP_s \subseteq \pr^n$ be a homogeneous
fat point scheme such that $\supp(Z)$ is a complete intersection.
Then, for every $P_i \in \supp(Z)$,  the tuple $\deg_Z(P_i)$ is
the same.   In particular, for every $P_i \in Z$,
the schemes $Z' = mP_1 + \cdots + (m-1)P_i + \cdots
+ mP_s$ all have the same Hilbert function.
\end{theorem}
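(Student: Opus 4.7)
The strategy is to show that, under the complete-intersection hypothesis, the last syzygy module of $I_Z$ has exactly $\nu = \binom{m+n-2}{n-1}$ summands, and every one of them is forced to contribute to $\deg_Z(P_i)$. Since the shifts in the last syzygy module are explicitly given by Theorem \ref{maintheorem} and manifestly do not depend on the choice of $P_i$, the tuple $\deg_Z(P_i)$ will also be independent of $i$.

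More precisely, fix $P_i \in \supp(Z)$ and let $Z'$ be the scheme obtained by reducing the multiplicity of $P_i$ by one, as in Convention \ref{convention}. By the Remark following Theorem \ref{theorem1}, the number of entries of $\deg_Z(P_i)$ is
\[\nu = \deg Z - \deg Z' = \binom{m+n-2}{n-1}.\]
By Theorem \ref{teofat} we can write
\[\mathbb{F}_{n-1} = \mathbb{F}'_{n-1} \oplus R(-d_1-n) \oplus \cdots \oplus R(-d_\nu-n),\]
where $\deg_Z(P_i) = (d_1,\ldots,d_\nu)$. On the other hand, Corollary \ref{rkcor} (which applies because $\supp(Z)$ is a complete intersection) gives
\[\operatorname{rk} \mathbb{F}_{n-1} = \binom{m+n-2}{n-1} = \nu.\]
Comparing ranks forces $\mathbb{F}'_{n-1} = 0$. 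Hence $\deg_Z(P_i)$ is read off directly from the complete list of shifts of $\mathbb{F}_{n-1}$ (after subtracting $n$).

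By Theorem \ref{maintheorem}, the shifts of $\mathbb{F}_{n-1}$ are precisely the integers $a_1\delta_1 + \cdots + a_n\delta_n$ for $(a_1,\ldots,a_n) \in \mathcal{M}_{n,m+n-1}$, with each such tuple appearing once. This description depends only on $m$ and on the type $(\delta_1,\ldots,\delta_n)$ of the complete intersection, and in particular is independent of the choice of point $P_i$. Therefore, after ordering, $\deg_Z(P_i)$ equals the $\nu$-tuple whose entries are the values $a_1\delta_1 + \cdots + a_n\delta_n - n$ for $(a_1,\ldots,a_n) \in \mathcal{M}_{n,m+n-1}$, arranged in non-decreasing order, regardless of $i$. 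This proves the first statement.

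For the ``in particular'' part, fix any two points $P_i, P_j \in \supp(Z)$ and let $Z'_i, Z'_j$ be the corresponding schemes with multiplicity reduced by one at $P_i$ and $P_j$ respectively. Since $H_Z$ is intrinsic to $Z$ and $\deg_Z(P_i) = \deg_Z(P_j)$ by what we just proved, Theorem \ref{hilbertfunction} yields $\Delta H_{Z'_i}(t) = \Delta H_{Z'_j}(t)$ for every $t \in \N$, and hence $H_{Z'_i} = H_{Z'_j}$. The only place where care is needed is in matching the two numerical invariants $\nu = \deg Z - \deg Z'$ and $\operatorname{rk} \mathbb{F}_{n-1}$; once these are seen to coincide via the remark after Theorem \ref{theorem1} and Corollary \ref{rkcor}, the rest is immediate from the known shape of the resolution.
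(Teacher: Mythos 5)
Your proposal is correct and follows essentially the same route as the paper: invoke Theorem \ref{teofat} to place the $\nu$ entries of $\deg_Z(P_i)$ among the shifts of $\mathbb{F}_{n-1}$, then use Corollary \ref{rkcor} to see that $\mathbb{F}_{n-1}$ has exactly $\nu$ summands, forcing $\mathbb{F}'_{n-1}=0$ and leaving a unique possibility for $\deg_Z(P_i)$ independent of $i$. Your write-up is a bit more explicit than the paper's (spelling out $\mathbb{F}'_{n-1}=0$, the explicit shifts from Theorem \ref{maintheorem}, and the derivation of the ``in particular'' statement from Theorem \ref{hilbertfunction}), but the argument is the same.
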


\begin{proof}
By Theorem \ref{teofat}, each of the $\nu = \deg Z - \deg Z'$ entries
of $\deg_Z(P_i) = (d_1,\ldots,d_{\nu})$ appear as shifts
of the form $-d_i-n$ among
the shifts of the $(n-1)$-th syzygy module of $I_Z$.  But by Corollary
\ref{rkcor}, there are exactly $\nu$ such shifts in $\mathbb{F}_{n-1}$
when $Z$ is a homogeneous fat point scheme whose support is
a complete intersection.  Thus, for each $P_i \in \supp(Z)$,
there is only choice for $\deg_Z(P_i)$.
\end{proof}

The above result can be interpreted as saying that
homogeneous fat point schemes whose support is a complete intersection
have a property similar to the Cayley-Bacharach property for
reduced points.
We recall this definition:

\begin{definition}
A set of reduced points $X = \{P_1,\ldots,P_s\} \subseteq \pr^n$ is said
to have the {\it Cayley-Bacharach property} (CBP) if for every $P
\in X$, the Hilbert function $H_{X\setminus \{P\}}$ is the same.
\end{definition}

Using Corollary \ref{hilbertonept}, one can prove:

\begin{theorem}  \label{CBPclassify}
Let $X = \{P_1,\ldots,P_s\}$
be a set of reduced points in $\pr^n$.
Then $X$ has the CBP if and only if $\deg_X(P_1) = \cdots = \deg_X(P_s)$.
\end{theorem}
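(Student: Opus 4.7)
The plan is to derive the equivalence directly from Corollary \ref{hilbertonept}, which completely pins down how $H_{X\setminus\{P_i\}}$ differs from $H_X$ in terms of the single integer $\deg_X(P_i)$. The key observation is that $\deg_X(P_i)$ can be recovered from the pair $(H_X, H_{X\setminus\{P_i\}})$ as the unique value of $t$ at which $\Delta H_{X\setminus\{P_i\}}(t) \neq \Delta H_X(t)$. Once this ``reconstruction'' viewpoint is in hand, both implications are essentially automatic.

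For the forward direction I would assume that $\deg_X(P_1) = \cdots = \deg_X(P_s) = d$. Applying Corollary \ref{hilbertonept} to each $P_i$ gives that $\Delta H_{X\setminus\{P_i\}}$ agrees with $\Delta H_X$ at every $t \neq d$ and equals $\Delta H_X(d) - 1$ at $t = d$. This prescription for $\Delta H_{X\setminus\{P_i\}}$ is independent of $i$, so by summing (``integrating'') to recover the Hilbert function itself, $H_{X\setminus\{P_i\}}$ is independent of $i$, which is the CBP.

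For the reverse direction I would assume the CBP, i.e.\ that $H_{X\setminus\{P_i\}}$ is the same function of $t$ for every $i$, and hence so is $\Delta H_{X\setminus\{P_i\}}$. Fix any two indices $i,j$ and set $d_i := \deg_X(P_i)$ and $d_j := \deg_X(P_j)$. By Corollary \ref{hilbertonept}, $\Delta H_{X\setminus\{P_i\}}$ differs from $\Delta H_X$ exactly at $t = d_i$ (by $-1$), and similarly for $j$. Since $\Delta H_{X\setminus\{P_i\}} = \Delta H_{X\setminus\{P_j\}}$, the unique $t$ at which each deviates from $\Delta H_X$ must coincide, forcing $d_i = d_j$.

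I do not anticipate a real technical obstacle here: the whole argument is a bookkeeping consequence of Corollary \ref{hilbertonept}, together with the remark that the Hilbert function and its first difference determine each other. The only point worth flagging is to make explicit that $\deg_X(P_i)$ is uniquely recoverable from the pair of Hilbert functions $(H_X, H_{X\setminus\{P_i\}})$, since both directions of the equivalence hinge on this uniqueness.
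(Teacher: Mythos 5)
Your proof is correct and matches exactly what the paper intends: the paper omits a written proof and only remarks that Theorem \ref{CBPclassify} follows from Corollary \ref{hilbertonept}, which is precisely the reconstruction argument you carry out. In particular, your observation that $\deg_X(P_i)$ is recoverable as the unique degree at which $\Delta H_{X\setminus\{P_i\}}$ and $\Delta H_X$ differ is the right way to make both directions explicit.
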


In Theorem \ref{degCI}, we showed that $\deg_Z(P_1) =
\cdots = \deg_Z(P_s)$ when $\supp(Z) = \{P_1,\ldots,P_s\}$ is
a complete intersection.   By analogy with Theorem \ref{CBPclassify},
this suggests that homogeneous fat point schemes whose
support is a complete intersection have a property similar
to the reduced sets of points with the CBP.
There are many examples of reduced sets
points with the CBP:  level sets of points,
Gorenstein sets of points, and complete intersections (the last
two are examples of the first).  It would be interesting to
find other classes of fat point schemes $Z$
which have the property that $\deg_Z(P_1) =
\cdots = \deg_Z(P_s)$ when $\supp(Z) = \{P_1,\ldots,P_s\}$.

\begin{remark}
In her Ph.D. thesis \cite{Gu1}, the first author introduced the definition
of a Cayley-Bacharach property for homogeneous schemes of
fat points in $\pr^2$ whose support is a complete intersection of
type $(a,b)$:

\begin{definition} \label{cb}
In $\pr^2$, a homogeneous scheme of fat points $Z$ whose
support is a complete intersection of type $(a,b)$ has
the {\it Cayley-Bacharach property} if for all $i = 1,\ldots,ab$,
the subschemes of $Z$ of type
\[Y_i = mP_1 +\cdots + \widehat{mP_i} + \cdots + mP_{ab} ~\mbox{with}~~ \deg(Y)=\deg(X)-
\binom{m+1}{2},\] have the same Hilbert function.
\end{definition}

Theorem 3.5.4 and Corollary 3.5.5 in \cite{Gu1} showed
that when $m=2$, all the homogeneous schemes of double points with
support a complete intersection have the Cayley-Bacharach property.
Note that the point-of-view taken in this definition is different
from the one we have used in this paper.  The schemes
being studied in \cite{Gu1} have ``removed'' the entire fat point, while
in this paper we have focused on what happens when we ``reduce''
the multiplicity of a point.
\end{remark}

Using Theorems \ref{maintheorem} and \ref{degCI}
we can actually calculate $\deg_Z(P) = (d_1,\ldots,d_{\nu})$
when $Z$ is a homogeneous fat point scheme supported on
a complete intersection directly from the
type of the complete intersection.  We illustrate this behavior
via an example.

\begin{example}
Consider a complete intersection of points $X$ in $\pr^3$ of type
$(2,3,4)$, and consider the homogeneous scheme of fat points $Z$
of multiplicity $m =3$ supported on $X$. Then
\[\mathcal{M}_{3,3+3-1} := \left\{ (a_1,a_2,a_3)\in \N^3
\left|\begin{array}{c}
a_1 + a_2+a_3 = 3+3-1 =5 ~~\mbox{and }\\
\mbox{$a_i \geq 1$ for all $i$}
\end{array} \right\}\right..\]
This set only contains six elements: \[\mathcal{M}_{3,5} =
\{(1,1,3),(1,3,1),(1,1,3),(1,2,2),(2,1,2),(2,2,1)\}\,.\] Thus, by
Theorem \ref{maintheorem}, the last syzygy module $\mathbb{F}_2$
in the resolution of $I_Z = I_X^3$ has the form:
\[R(-1\cdot2 - 1\cdot3- 3\cdot 4)\oplus R(-1\cdot2 - 3\cdot3- 1\cdot 4)
\oplus
R(-3\cdot2 - 1\cdot3- 1\cdot
4)\] \[\oplus R(-1\cdot2 -
2\cdot3- 2\cdot 4)\oplus R(-2\cdot2 - 1\cdot3- 2\cdot 4)\oplus
R(-2\cdot2 - 2\cdot3- 1\cdot 4)\]
\[ = R(-13)\oplus R(-14)\oplus R^2(-15) \oplus R(-16) \oplus R(-17).\]
Thus, for any $P \in \supp(Z)$, Theorems \ref{teofat} and \ref{degCI}
give
\[\deg_Z(P) = (13-3,14-3,15-3,15-3,16-3,17-3) = (10,11,12,12,13,14).\]
\end{example}


\end{document}